\def\imod#1{\allowbreak\mkern10mu({\operator@font mod}\,\,#1)}
\newtheorem{theorem}{Theorem}[section]
\newtheorem{lemma}{Lemma}[section]
\newtheorem{corollary}{Corollary}[section]
\newtheorem{fact}{Fact}[section]
\theoremstyle{definition}
\newtheorem{definition}{Definition}[section]
\newtheorem{remark}{Remark}[section]
\begin{document}
\begin{center}
\begin{singlespace}
\vskip 1cm{\LARGE\bf Multiperfect Numbers in Certain Quadratic Rings
\vskip 1cm
\large
Colin Defant\footnote{This work was supported by National Science Foundation grant no. 1262930.}\footnote{Colin Defant\\ 18434 Hancock Bluff Rd. \\ Dade City, FL 33523}\footnote{2010 {\it Mathematics Subject Classification}:  Primary 11R11; Secondary 11N80.\\
\emph{Keywords: } Abundancy index, quadratic ring, perfect number, multiperfect number.}\\
Department of Mathematics\\
University of Florida\\
United States\\
cdefant@ufl.edu}
\end{singlespace}
\end{center}
\vskip .2 in

\begin{abstract}
Using an extension of the abundancy index to imaginary quadratic rings that are unique factorization domains, we investigate what we call $n$-powerfully $t$-perfect numbers in these rings. This definition serves to extend the concept of multiperfect numbers that have been defined and studied in the integers. At the end of the paper, as well as at various points throughout the paper, we point to some potential areas for further research.    
\end{abstract}

\section{Introduction} 

Throughout this paper, we will let $\mathbb{N}$ denote the set of positive integers, and we will let $\mathbb{P}=\{2, 3, 5, \ldots\}$ denote the set of (integer) prime numbers. 
\par 
The arithmetic functions $\sigma_k$ are defined, for every integer $k$, by  \\ 
$\displaystyle{\sigma_k(n)=\sum_{\substack{c\vert n\\c>0}}c^k}$. For each integer $k\neq 0$, $\sigma_k$ is multiplicative and satisfies \\ 
$\displaystyle{\sigma_k (p^\alpha)=\frac{p^{k(\alpha+1)}-1}{p^k-1}}$ for all (integer) primes $p$ and positive integers $\alpha$. The abundancy index of a positive integer $n$ is defined by $\displaystyle{I(n)=\frac{\sigma_1(n)}{n}}$. Some of the most interesting questions related to the abundancy index are those dealing with perfect and multiperfect numbers. 
\par
A positive integer $n$ is said to be $t$-perfect if $I(n)=t$ for a positive integer $t\geq 2$, and $2$-perfect numbers, which have been studied since the ancient Greeks, are called perfect numbers. It is known that there is a one-to-one correspondence between even perfect numbers and Mersenne primes, so it is, therefore, unknown whether or not there are infinitely many even perfect numbers. Although no odd perfect numbers are currently known to exist, a long list of criteria, sometimes known as Sylvester's Web of Conditions, places demands on the properties that any odd perfect number would need to satisfy.     
\par 
For any square-free integer $d$, let $\mathcal O_{\mathbb{Q}(\sqrt{d})}$ be the quadratic integer ring given by \[\mathcal O_{\mathbb{Q}(\sqrt{d})}=\begin{cases} \mathbb{Z}[\frac{1+\sqrt{d}}{2}], & \mbox{if } d\equiv 1\imod{4}; \\ \mathbb{Z}[\sqrt{d}], & \mbox{if } d\equiv 2, 3 \imod{4}. \end{cases}\] 
\par 
Throughout the remainder of this paper, we will work in the rings $\mathcal O_{\mathbb{Q}(\sqrt{d})}$ for different specific or arbitrary values of $d$. We will use the symbol ``$\vert$" to mean ``divides" in the ring $\mathcal O_{\mathbb{Q}(\sqrt{d})}$ in which we are working. 
Whenever we are working in a ring other than $\mathbb{Z}$, we will make sure to emphasize when we wish to state that one integer divides another in $\mathbb{Z}$. 
For example, if we are working in $\mathbb{Z}[i]$, the ring of Gaussian integers, we might say that $1+i\vert 1+3i$ and that $2\vert 6$ in $\mathbb{Z}$. We will also refer to primes in $\mathcal O_{\mathbb{Q}(\sqrt{d})}$ as ``primes," whereas we will refer to (positive) primes in $\mathbb{Z}$ as ``integer primes." For an integer prime $p$ and a nonzero integer $n$, we will let $\upsilon_p(n)$ denote the largest integer $k$ such that $p^k\vert n$ in $\mathbb{Z}$. For a prime $\pi$ and a nonzero number $x\!\in\!\mathcal O_{\mathbb{Q}(\sqrt{d})}$, we will let $\rho_\pi(x)$ denote the largest integer $k$ such that $\pi^k\vert x$.  
Furthermore, we will henceforth focus exclusively on values of $d$ for which $\mathcal O_{\mathbb{Q}(\sqrt{d})}$ is a unique factorization domain and $d<0$. In other words, $d\in K$, where we will define $K$ to be the set $\{-163,-67,-43,-19,-11,-7,-3,-2,-1\}$. The set $K$ is known to be the complete set of negative values of $d$ for which $\mathcal O_{\mathbb{Q}(\sqrt{d})}$ is a unique factorization domain \cite{Stark67}.  
\par 
For now, let us work in a general ring $\mathcal O_{\mathbb{Q}(\sqrt{d})}$ such that $d\!\in\! K$. For an element $a+b\sqrt{d}\in\mathcal O_{\mathbb{Q}(\sqrt{d})}$ with $a,b\in \mathbb{Q}$, we define the conjugate by $\overline{a+b\sqrt{d}}=a-b\sqrt{d}$. We also define the norm of an element $z$ by $N(z)=z\overline{z}$ and the absolute value of $z$ by $\vert z\vert=\sqrt{N(z)}$. We assume familiarity with the properties of these object, which are treated in Keith Conrad's online notes \cite{Conrad}. For $x,y\in\mathcal O_{\mathbb{Q}(\sqrt{d})}$, we say that $x$ and $y$ are associated, denoted $x\sim y$, if and only if $x=uy$ for some unit $u$ in the ring $\mathcal O_{\mathbb{Q}(\sqrt{d})}$. Furthermore, we will make repeated use of the following well-known facts. 
\begin{fact} \label{Fact1.1}
Let $d\!\in\! K$. If $p$ is an integer prime, then exactly one of the following is true. 
\begin{itemize}
\item $p$ is also a prime in $\mathcal O_{\mathbb{Q}(\sqrt{d})}$. In this case, we say that $p$ is inert in $\mathcal O_{\mathbb{Q}(\sqrt{d})}$. 
\item $p\sim \pi^2$ and $\pi\sim\overline{\pi}$ for some prime $\pi\in \mathcal O_{\mathbb{Q}(\sqrt{d})}$. In this case, we say $p$ ramifies (or $p$ is ramified) in $\mathcal O_{\mathbb{Q}(\sqrt{d})}$. 
\item $p=\pi\overline{\pi}$ and $\pi\not\sim\overline{\pi}$ for some prime $\pi\in\mathcal O_{\mathbb{Q}(\sqrt{d})}$. In this case, we say $p$ splits (or $p$ is split) in $\mathcal O_{\mathbb{Q}(\sqrt{d})}$.
\end{itemize}
\end{fact}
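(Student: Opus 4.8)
The plan is to read everything off from the multiplicativity of the norm together with unique factorization. First I would recall the standard facts (treated in Conrad's notes \cite{Conrad}) that, for $d\in K$, the norm $N$ is multiplicative, takes nonnegative integer values, vanishes only at $0$, and satisfies $N(z)=1$ if and only if $z$ is a unit; in particular every nonzero non-unit has norm at least $2$. Since $N(p)=p\overline p=p^2$, writing $p=\pi_1\cdots\pi_k$ as a product of primes in the UFD $\mathcal O_{\mathbb{Q}(\sqrt d)}$ (absorbing any unit into $\pi_1$) forces $N(\pi_1)\cdots N(\pi_k)=p^2$ with each $N(\pi_i)$ a divisor of $p^2$ that is at least $2$, hence each $N(\pi_i)\in\{p,p^2\}$. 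If some $N(\pi_i)=p^2$ then the remaining factors have norm $1$, so there are none and $k=1$; if all $N(\pi_i)=p$ then $p^k=p^2$, so $k=2$. Thus either $p$ is itself prime in $\mathcal O_{\mathbb{Q}(\sqrt d)}$ (the inert case), or $p=\pi_1\pi_2$ with $\pi_1,\pi_2$ prime and $N(\pi_1)=N(\pi_2)=p$.

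In the case $k=2$ I would use $\pi_1\overline{\pi_1}=N(\pi_1)=p=\pi_1\pi_2$ and cancel $\pi_1$ (valid in an integral domain) to obtain $\pi_2=\overline{\pi_1}$, so that in fact $p=\pi_1\overline{\pi_1}$. Now everything comes down to whether $\pi_1\sim\overline{\pi_1}$. If $\pi_1\not\sim\overline{\pi_1}$, this is precisely the split case with $\pi=\pi_1$. If $\pi_1\sim\overline{\pi_1}$, write $\overline{\pi_1}=u\pi_1$ with $u$ a unit; then $p=\pi_1\overline{\pi_1}=u\pi_1^2$, so $p\sim\pi_1^2$ with $\pi_1\sim\overline{\pi_1}$, which is the ramified case.

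Finally I would verify mutual exclusivity. If $p$ is prime (hence irreducible) in $\mathcal O_{\mathbb{Q}(\sqrt d)}$, it cannot equal $\pi\overline\pi$ nor a unit times $\pi^2$ for a prime $\pi$, since each of those writes $p$ as a product of two non-units; so the inert case rules out the other two. To separate the ramified and split cases, invoke unique factorization: a ramified prime $p\sim\pi^2$ has prime-factor multiset $\{\pi,\pi\}$ up to associates, so if also $p=\tau\overline\tau$ then $\tau\sim\pi$ and $\overline\tau\sim\pi$, whence $\tau\sim\overline\tau$, contradicting the split hypothesis. Hence exactly one of the three alternatives holds. I do not expect a real obstacle here; the only points demanding care are justifying that there are at most two prime factors — this is exactly where $d<0$ is used, so that norms are genuine nonnegative integers rather than merely integers — and observing that the two prime factors in the non-inert case are forced to be conjugates of one another.
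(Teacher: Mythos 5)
Your proof is correct, and there is nothing in the paper to compare it against: Fact \ref{Fact1.1} is stated there as a well-known fact without proof, with the relevant background delegated to the cited notes of Conrad. Your argument --- bounding the number of prime factors of $p$ via $N(p)=p^2$ and multiplicativity of the norm in the UFD $\mathcal O_{\mathbb{Q}(\sqrt{d})}$, cancelling to get $p=\pi\overline{\pi}$ in the non-inert case, splitting on whether $\pi\sim\overline{\pi}$, and using irreducibility plus uniqueness of factorization for mutual exclusivity --- is exactly the standard proof those references give.
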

\begin{fact} \label{Fact1.2}
Let $d\!\in\! K$. If $\pi\!\in\!\mathcal O_{\mathbb{Q}(\sqrt{d})}$ is a prime, then exactly one of the following is true. 
\begin{itemize}
\item $\pi\sim q$ and $N(\pi)=q^2$ for some inert integer prime $q$. 
\item $\pi\sim\overline{\pi}$ and $N(\pi)=p$ for some ramified integer prime $p$. 
\item $\pi\not\sim\overline{\pi}$ and $N(\pi)=N(\overline{\pi})=p$ for some split integer prime $p$. 
\end{itemize}
\end{fact}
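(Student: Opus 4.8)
The plan is to extract everything from the norm. Since $d<0$, for any nonzero $z\in\mathcal O_{\mathbb{Q}(\sqrt{d})}$ the norm $N(z)=z\overline{z}$ is a positive rational integer, and $N(z)=1$ exactly when $z$ is a unit; hence $N(\pi)$ is an integer that is at least $2$. Writing $N(\pi)=\pi\overline{\pi}=p_1p_2\cdots p_k$ as a product of integer primes, the relation $\pi\mid\pi\overline{\pi}$ together with the primality of $\pi$ in $\mathcal O_{\mathbb{Q}(\sqrt{d})}$ forces $\pi\mid p$ in $\mathcal O_{\mathbb{Q}(\sqrt{d})}$ for some integer prime $p=p_i$. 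I would then run through the three possibilities for $p$ supplied by Fact~\ref{Fact1.1}.

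If $p$ is inert, then $p$ is itself a prime of the ring, so $\pi\mid p$ gives $\pi\sim p$, and taking norms yields $N(\pi)=N(p)=p^2$; this is the first alternative with $q=p$. If $p$ ramifies, write $p\sim\tau^2$ with $\tau\sim\overline{\tau}$ for a prime $\tau$; then $\pi\mid\tau^2$, so $\pi\mid\tau$ and hence $\pi\sim\tau$ (a prime dividing a prime is an associate of it), whence $\pi\sim\overline{\pi}$, using that the conjugate of a unit is a unit so that $x\sim y$ implies $\overline{x}\sim\overline{y}$. Comparing norms in $p\sim\tau^2$ gives $p^2=N(\tau)^2$, so $N(\pi)=N(\tau)=p$, the second alternative. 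If $p$ splits, write $p=\tau\overline{\tau}$ with $\tau\not\sim\overline{\tau}$; then $\pi\mid\tau\overline{\tau}$ gives $\pi\sim\tau$ or $\pi\sim\overline{\tau}$, and in either case $N(\pi)=N(\tau)=N(\overline{\tau})=p$, while $\pi\not\sim\overline{\pi}$, since $\pi\sim\overline{\pi}$ would force $\tau\sim\overline{\tau}$. This is the third alternative.

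Finally I would check that at most one of the alternatives can hold. They pin down the positive integer $N(\pi)$ to be, respectively, the square of an integer prime, a ramified integer prime, or a split integer prime. A fixed positive integer cannot simultaneously be a prime and the square of a prime, and an integer prime cannot be simultaneously ramified and split by Fact~\ref{Fact1.1}; hence the three alternatives are mutually exclusive, and combined with the case analysis exactly one of them holds.

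I expect the only real subtlety to be routine care with associates and conjugation --- verifying that $x\sim y$ implies $\overline{x}\sim\overline{y}$, and that $\pi\sim\overline{\pi}$ is incompatible with $\tau\not\sim\overline{\tau}$ when $\pi\sim\tau$ --- together with the observation that $N(\pi)\geq 2$, which is what guarantees that the factorization of $N(\pi)$ into integer primes is nontrivial so that $\pi$ genuinely divides one of them.
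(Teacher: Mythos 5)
Your proof is correct. Note that the paper itself supplies no argument here: Fact~\ref{Fact1.2} is stated as a well-known fact (with Conrad's notes cited for background), so there is nothing to compare against; your write-up is the standard derivation of Fact~\ref{Fact1.2} from Fact~\ref{Fact1.1}. The key steps are all sound: $N(\pi)=\pi\overline{\pi}$ is a rational integer $\geq 2$ because $d<0$ and $\pi$ is not a unit, so primality of $\pi$ forces $\pi\mid p$ for some integer prime $p$; the three cases of Fact~\ref{Fact1.1} then yield exactly the three alternatives, with the associate/conjugate bookkeeping ($x\sim y\Rightarrow\overline{x}\sim\overline{y}$ since conjugates of units are units, and $\pi\sim\tau$, $\pi\sim\overline{\pi}$ would force $\tau\sim\overline{\tau}$ in the split case) handled correctly; and mutual exclusivity follows since $N(\pi)$ cannot be both an integer prime and the square of one, while ramified and split are exclusive by Fact~\ref{Fact1.1}.
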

\begin{fact} \label{Fact1.3}
If $d\!\in K\!$, $q$ is an integer prime that is inert in $\mathcal O_{\mathbb{Q}(\sqrt{d})}$, and $x\in\mathcal O_{\mathbb{Q}(\sqrt{d})}\backslash\{0\}$, then $\upsilon_q(N(x))$ is even and $\rho_q(x)=\frac{1}{2}\upsilon_q(N(x))$.
\end{fact}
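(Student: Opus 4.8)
The plan is to reduce the statement to two elementary facts about an inert integer prime $q$: that it is a prime \emph{element} of $\mathcal O_{\mathbb{Q}(\sqrt{d})}$ (immediate from the first bullet of Fact~\ref{Fact1.1}), and that $N(q)=q\overline{q}=q^2$ (since $q\in\mathbb{Z}$, so $\overline{q}=q$; this is also the first bullet of Fact~\ref{Fact1.2}). First I would set $k=\rho_q(x)$ and write $x=q^k y$ in $\mathcal O_{\mathbb{Q}(\sqrt{d})}$, where by the definition of $\rho_q$ the element $y$ satisfies $q\nmid y$. Since $d<0$, the norm form is positive definite, so $N(x)$ and $N(y)$ are positive integers and $\upsilon_q$ is meaningful on them. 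Taking norms and using multiplicativity of $N$ gives $N(x)=N(q)^k N(y)=q^{2k}N(y)$, a factorization in $\mathbb{Z}$.

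The crux is then to show that $q\nmid N(y)$ in $\mathbb{Z}$, so that the exact power of $q$ dividing $N(x)$ is $2k$. Suppose for contradiction that $q\mid N(y)$ in $\mathbb{Z}$; then $q\mid y\overline{y}$ in $\mathcal O_{\mathbb{Q}(\sqrt{d})}$. Because $q$ is inert, it is a prime element, so $q\mid y$ or $q\mid\overline{y}$. In the second case, applying conjugation to $q\mid\overline{y}$ and using $\overline{q}=q$ yields $q\mid y$ as well. Either way $q\mid y$, contradicting our choice of $y$. Hence $\upsilon_q(N(x))=\upsilon_q\!\left(q^{2k}N(y)\right)=2k$, which is even, and $\rho_q(x)=k=\tfrac12\upsilon_q(N(x))$, as desired.

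I do not anticipate a serious obstacle here. The only points that require a little care are confirming that the norm lands in the positive integers (which is exactly where $d<0$ is used, ensuring $\upsilon_q$ applies), and noticing that unique factorization is not actually needed for this particular fact — only that inert primes are prime elements. The one genuinely load-bearing idea is the conjugation step, and it works precisely because $q$ is a rational integer and therefore equals its own conjugate.
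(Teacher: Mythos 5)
Your proof is correct: the paper states Fact~\ref{Fact1.3} without proof (as a well-known fact), and your argument is exactly the standard one — writing $x=q^{\rho_q(x)}y$ with $q\nmid y$, taking norms, and using that $q$ is a prime element equal to its own conjugate to rule out $q\mid N(y)$. Your side remarks (that $d<0$ only ensures $N$ takes positive integer values, and that unique factorization is not needed beyond inert primes being prime elements) are also accurate.
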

\begin{fact} \label{Fact1.4}
Let $p$ be an odd integer prime. Then $p$ ramifies in $\mathcal O_{\mathbb{Q}(\sqrt{d})}$ if and only if $p\vert d$ in $\mathbb{Z}$. If $p\nmid d$ in $\mathbb{Z}$, then $p$ splits in $\mathcal O_{\mathbb{Q}(\sqrt{d})}$ if and only if $d$ is a quadratic residue modulo $p$. Note that this implies that $p$ is inert in $\mathcal O_{\mathbb{Q}(\sqrt{d})}$ if and only if $p\nmid d$ in $\mathbb{Z}$ and $d$ is a quadratic nonresidue modulo $p$. 
Also, the integer prime $2$ ramifies in $\mathcal O_{\mathbb{Q}(\sqrt{-1})}$ and $\mathcal O_{\mathbb{Q}(\sqrt{-2})}$, splits in $\mathcal O_{\mathbb{Q}(\sqrt{-7})}$, and is inert in $\mathcal O_{\mathbb{Q}(\sqrt{d})}$ for all $d\in K\backslash\{-1,-2,-7\}$.
\end{fact}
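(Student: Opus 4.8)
The plan is to reduce the splitting type of an integer prime $p$ in $\mathcal O_{\mathbb{Q}(\sqrt{d})}$ to the factorization, over the field $\mathbb{F}_p$, of the minimal polynomial of a generator of $\mathcal O_{\mathbb{Q}(\sqrt{d})}$, and then read off the stated criteria by elementary manipulations with quadratic residues. Since $d\in K$, we may write $\mathcal O_{\mathbb{Q}(\sqrt{d})}=\mathbb{Z}[\theta]$ with $\theta=\sqrt{d}$ when $d\equiv 2,3\imod{4}$ and $\theta=\frac{1+\sqrt{d}}{2}$ when $d\equiv 1\imod{4}$; let $f\in\mathbb{Z}[x]$ be the monic quadratic minimal polynomial of $\theta$, so that $f(x)=x^2-d$ in the first case and $f(x)=x^2-x+\frac{1-d}{4}$ in the second. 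Because $\mathcal O_{\mathbb{Q}(\sqrt{d})}=\mathbb{Z}[x]/(f(x))$ exactly (which is precisely why $\mathcal O_{\mathbb{Q}(\sqrt{d})}$ is defined by those two cases), we obtain $\mathcal O_{\mathbb{Q}(\sqrt{d})}/(p)\cong\mathbb{F}_p[x]/(\overline{f}(x))$, where $\overline{f}$ is the reduction of $f$ modulo $p$. From this I would argue: if $\overline{f}$ is irreducible over $\mathbb{F}_p$ the quotient is the field $\mathbb{F}_{p^2}$, so $(p)$ is prime and $p$ is inert; if $\overline{f}=(x-a)^2$ the quotient contains a nonzero nilpotent, and $p$ ramifies; and if $\overline{f}=(x-a)(x-b)$ with $a\neq b$, the Chinese Remainder Theorem identifies the quotient with $\mathbb{F}_p\times\mathbb{F}_p$, so $p$ splits. (Alternatively, one may take the trichotomy directly from Fact~\ref{Fact1.1} and use $\overline{f}$ only to decide which of the three cases occurs.)

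The odd case is then a short computation. When $d\equiv 2,3\imod{4}$ and $p$ is odd, $\overline{f}=x^2-d$ over $\mathbb{F}_p$ equals $x^2$ precisely when $p\mid d$ in $\mathbb{Z}$ (so $p$ ramifies), factors as $(x-c)(x+c)$ with $c\not\equiv-c\imod{p}$ exactly when $p\nmid d$ and $d$ is a quadratic residue mod $p$ (so $p$ splits), and is irreducible exactly when $p\nmid d$ and $d$ is a quadratic nonresidue mod $p$ (so $p$ is inert). When $d\equiv 1\imod{4}$ and $p$ is odd, $2$ is invertible mod $p$, so completing the square shows that $\overline{f}$ has the same factorization type over $\mathbb{F}_p$ as $y^2-\tfrac d4$; since $4$ is a square mod $p$, this polynomial has a double root exactly when $p\mid d$, splits into two distinct linear factors exactly when $p\nmid d$ and $d$ is a quadratic residue mod $p$, and is irreducible otherwise --- the identical trichotomy. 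This establishes the first two sentences of the statement, and the inert characterization follows because exactly one of the three cases must hold.

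For $p=2$ I would simply run through the nine elements of $K$. If $d\equiv 2,3\imod{4}$, i.e.\ $d=-2$ or $d=-1$, then $\overline{f}=x^2-d$ reduces mod $2$ to $x^2$ or to $x^2-1=(x-1)^2$, so $2$ ramifies in $\mathcal O_{\mathbb{Q}(\sqrt{-2})}$ and in $\mathcal O_{\mathbb{Q}(\sqrt{-1})}$. If $d\equiv 1\imod{4}$, then mod $2$ we have $\overline{f}=x^2-x+\frac{1-d}{4}$, whose constant term $\frac{1-d}{4}$ is even when $d\equiv 1\imod{8}$ and odd when $d\equiv 5\imod{8}$; in the first case $\overline{f}=x(x+1)$ has distinct roots and $2$ splits, and in the second $\overline{f}=x^2+x+1$ is irreducible and $2$ is inert. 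Since $-7\equiv 1\imod{8}$ while each of $-3,-11,-19,-43,-67,-163$ is congruent to $5$ mod $8$, the prime $2$ splits in $\mathcal O_{\mathbb{Q}(\sqrt{-7})}$ and is inert in $\mathcal O_{\mathbb{Q}(\sqrt{d})}$ for every $d\in K\backslash\{-1,-2,-7\}$, as claimed.

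The only point that needs care is the passage between the structure of $\mathbb{F}_p[x]/(\overline{f})$ and the precise statements about associated primes in Fact~\ref{Fact1.1} --- namely, checking that a repeated root corresponds to $p\sim\pi^2$ with $\pi\sim\overline{\pi}$, and that two distinct roots correspond to a genuine splitting $p=\pi\overline{\pi}$ with $\pi\not\sim\overline{\pi}$. Once Fact~\ref{Fact1.1} is invoked outright, even this reduces to matching the number of prime factors of $(p)$, and the rest is bookkeeping with Legendre symbols. A more hands-on alternative would be to analyze the norm equations $a^2-db^2=p$ and $a^2-db^2=4p$ directly, but that route is messier and ultimately relies on exactly the same quadratic-residue facts.
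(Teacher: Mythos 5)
Your proof is correct, but note that the paper never proves Fact \ref{Fact1.4} at all: it is listed among the ``well-known facts'' (Facts \ref{Fact1.1}--\ref{Fact1.5}) that are simply cited, with Conrad's notes \cite{Conrad} as the background reference. What you have written is essentially the standard Dedekind--Kummer argument that such a reference would supply: since $\mathcal O_{\mathbb{Q}(\sqrt{d})}=\mathbb{Z}[\theta]\cong\mathbb{Z}[x]/(f)$, the ring $\mathcal O_{\mathbb{Q}(\sqrt{d})}/(p)\cong\mathbb{F}_p[x]/(\overline{f})$, and the three factorization types of $\overline{f}$ produce three non-isomorphic quotients (a field $\mathbb{F}_{p^2}$, a ring with a nonzero nilpotent, and the reduced non-field $\mathbb{F}_p\times\mathbb{F}_p$), which match the inert, ramified, and split cases of Fact \ref{Fact1.1} respectively --- inert gives a field, $p\sim\pi^2$ gives the nilpotent $\pi$, and $p=\pi\overline{\pi}$ with $\pi\not\sim\overline{\pi}$ gives $\mathcal O/(\pi)\times\mathcal O/(\overline{\pi})$ by the Chinese Remainder Theorem. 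Your quadratic-residue computations for odd $p$ in both congruence classes of $d$, and your case check at $p=2$ via $d\bmod 8$ (with $-7\equiv 1\pmod 8$ and the remaining $d\equiv 1\pmod 4$ elements of $K$ congruent to $5\pmod 8$), are all accurate. The one place to tighten the write-up is the aside that the identification ``reduces to matching the number of prime factors of $(p)$'': counting distinct prime divisors alone does not separate inert from ramified, so you should rely on the quotient-structure trichotomy (field versus nilpotents versus reduced non-field) that you already set up, which does the job completely.
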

\begin{fact} \label{Fact1.5}
Let $\mathcal O_{\mathbb{Q}(\sqrt{d})}^*$ be the set of units in the ring $\mathcal O_{\mathbb{Q}(\sqrt{d})}$. Then $\mathcal O_{\mathbb{Q}(\sqrt{-1})}^*=\{\pm 1,\pm i\}$, $\displaystyle{\mathcal O_{\mathbb{Q}(\sqrt{-3})}^*=\left\{\pm 1,\pm \frac{1+\sqrt{-3}}{2},\pm \frac{1-\sqrt{-3}}{2}\right\}}$, and $\mathcal O_{\mathbb{Q}(\sqrt{d})}^*=\{\pm 1\}$ \\ 
whenever $d\in K\backslash\{-1,-3\}$. 
\end{fact}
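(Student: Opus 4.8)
The plan is to reduce Fact \ref{Fact1.5} to a handful of elementary Diophantine equations through the standard norm criterion for units. First I would prove the lemma that, for any $d\in K$, an element $z\in\mathcal O_{\mathbb{Q}(\sqrt{d})}$ is a unit if and only if $N(z)=1$. One direction uses multiplicativity of the norm: if $zw=1$ for some $w\in\mathcal O_{\mathbb{Q}(\sqrt{d})}$, then $N(z)N(w)=N(1)=1$; since $d<0$, the norm of every nonzero element of $\mathcal O_{\mathbb{Q}(\sqrt{d})}$ is a positive integer, so $N(z)=1$. The converse is immediate, because $N(z)=z\overline{z}=1$ exhibits $\overline{z}$ (which again lies in $\mathcal O_{\mathbb{Q}(\sqrt{d})}$, as the ring is closed under conjugation) as a multiplicative inverse of $z$.

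Next I would split according to the shape of $\mathcal O_{\mathbb{Q}(\sqrt{d})}$. If $d\equiv 2,3\imod{4}$ --- which among $d\in K$ happens only for $d=-1$ and $d=-2$ --- every element has the form $z=a+b\sqrt{d}$ with $a,b\in\mathbb{Z}$, so $N(z)=a^2+|d|b^2$, and I must solve $a^2+|d|b^2=1$ over $\mathbb{Z}$. For $|d|=1$ the solutions are $(a,b)\in\{(\pm 1,0),(0,\pm 1)\}$, giving $\mathcal O_{\mathbb{Q}(\sqrt{-1})}^*=\{\pm 1,\pm i\}$; for $|d|=2$ the only solutions are $(\pm 1,0)$, giving $\mathcal O_{\mathbb{Q}(\sqrt{-2})}^*=\{\pm 1\}$. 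If instead $d\equiv 1\imod{4}$ --- i.e.\ $d\in\{-3,-7,-11,-19,-43,-67,-163\}$ --- every element has the form $z=\frac{a+b\sqrt{d}}{2}$ with $a,b\in\mathbb{Z}$ of the same parity, so $N(z)=\frac{a^2+|d|b^2}{4}$ and I must solve $a^2+|d|b^2=4$. When $|d|\geq 7$ this forces $b=0$ and $a=\pm 2$, i.e.\ $z=\pm 1$, so $\mathcal O_{\mathbb{Q}(\sqrt{d})}^*=\{\pm 1\}$ for those six values of $d$. The remaining case $d=-3$ reduces to $a^2+3b^2=4$, whose integer solutions $(\pm 2,0)$ and $(\pm 1,\pm 1)$ all satisfy the same-parity requirement, and translating them back gives exactly $\left\{\pm 1,\pm\frac{1+\sqrt{-3}}{2},\pm\frac{1-\sqrt{-3}}{2}\right\}$.

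This argument has no genuinely hard step; the only thing requiring a little care is applying the correct description of $\mathcal O_{\mathbb{Q}(\sqrt{d})}$ in each case --- in particular the parity constraint on $a$ and $b$ when $d\equiv 1\imod{4}$ --- together with the observation that once $|d|$ is large enough the term $|d|b^2$ rules out any solution with $b\neq 0$, so the finitely many values of $d\in K$ can be dispatched one at a time.
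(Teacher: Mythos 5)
Your proof is correct and complete: the norm criterion for units (valid here precisely because $d<0$ makes the norm a nonnegative rational integer), together with the parity-aware description of $\mathcal O_{\mathbb{Q}(\sqrt{d})}$ when $d\equiv 1\imod{4}$, reduces the claim to the finitely many Diophantine equations you list, and your case analysis of those is accurate. The paper offers no proof of Fact \ref{Fact1.5} at all---it is cited as well known, with the background deferred to Conrad's notes---and your argument is exactly the standard one such references give, so there is nothing to reconcile.
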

\par 
For a nonzero complex number $z$, let $\arg (z)$ denote the argument, or angle, of $z$. We convene to write $\arg (z)\in [0,2\pi)$ for all $z\in\mathbb{C}$. For each $d\in K$, we define the set $A(d)$ by 
\[A(d)=\begin{cases} \{z\in\mathcal O_{\mathbb{Q}(\sqrt{d})} \backslash\{0\}: 0\leq \arg (z)<\frac{\pi}
{2}\}, & \mbox{if } d=-1; \\ \{z\in\mathcal O_{\mathbb{Q}(\sqrt{d})} \backslash\{0\}: 0\leq \arg (z)<\frac{\pi}
{3}\}, & \mbox{if } d=-3; \\ \{z\in\mathcal O_{\mathbb{Q}(\sqrt{d})} \backslash\{0\}: 0\leq \arg (z)<\pi\}, & \mbox{otherwise}. \end{cases}\] 
Thus, every nonzero element of $\mathcal O_{\mathbb{Q}(\sqrt{d})}$ can be written uniquely as a unit times a product of primes in $A(d)$. Also, every $z\in\mathcal O_{\mathbb{Q}(\sqrt{d})}\backslash\{0\}$ is associated to a unique element of $A(d)$. The author has defined analogues of the arithmetic functions $\sigma_k$ in quadratic rings $\mathcal O_{\mathbb{Q}(\sqrt{d})}$ with $d\in K$ \cite{Defant14}, and we will state the important definitions and properties for the sake of completeness.   
\begin{definition} \label{Def1.1}
Let $d\in K$, and let $n\in \mathbb{Z}$. 
Define the function 
\newline $\delta_n\colon\mathcal O_{\mathbb{Q}(\sqrt{d})}\backslash\{0\}\rightarrow [1,\infty)$ by 
\[\delta_n (z)=\sum_{\substack{x\vert z\\x\in A(d)}}\vert x \vert^n.\]
\end{definition}
\begin{remark} \label{Rem1.1}
We note that, for each $x$ in the summation in the above definition, we may cavalierly replace $x$ with one of its associates. This is because associated numbers have the same absolute value. In other words, the only reason for the criterion $x\!\in\! A (d)$ in the summation that appears in Definition \ref{Def1.1} is to forbid us from counting associated divisors as distinct terms in the summation, but we may choose to use any of the associated divisors as long as we only choose one. This should not be confused with how we count conjugate divisors (we treat $2+i$ and $2-i$ as distinct divisors of $5$ in $\mathbb{Z}[i]$ because $2+i\not\sim 2-i$).  
\end{remark}

\begin{remark} \label{Rem1.2}
We mention that the function $\delta_n$ is different in each ring $\mathcal O_{\mathbb{Q}(\sqrt{d})}$. Perhaps it would be more precise to write $\delta_n(z,d)$, but we will omit the latter component for convenience. We note that we will also use this convention with functions such as $I_n$ (which we will define soon). 
\end{remark}
\par 
We will say that a function $f\colon\mathcal O_{\mathbb{Q}(\sqrt{d})}\backslash\{0\}\!\rightarrow\!\mathbb{R}$ is multiplicative if $f(xy)=f(x)f(y)$ whenever $x$ and $y$ are relatively prime (have no nonunit common divisors). The author has shown that,
for any integer $n$, $\delta_n$ is multiplicative \cite{Defant14}.
\begin{definition} \label{Def1.2}
For each positive integer $n$, define the function \\ 
$I_n\colon\mathcal O_{\mathbb{Q}(\sqrt{d})}\backslash\{0\}\rightarrow[1,\infty)$ by $\displaystyle{I_n(z)=\frac{\delta_n(z)}{\vert z\vert ^n}}$. 
For a positive integer $t\geq 2$, we say that a number $z\!\in\!\mathcal O_{\mathbb{Q}(\sqrt{d})}\backslash\{0\}$ is \textit{$n$-powerfully $t$-perfect in $\mathcal O_{\mathbb{Q}(\sqrt{d})}$} if $I_n(z)=t$, and, if $t=2$, we simply say that $z$ is \textit{$n$-powerfully perfect in $\mathcal O_{\mathbb{Q}(\sqrt{d})}$}. 
Whenever $n=1$, we will omit the adjective ``$1$-powerfully." 
\end{definition} 
As an example, we will let $d=-1$ so that $\mathcal O_{\mathbb{Q}(\sqrt{d})}=\mathbb{Z}[i]$. Let us compute $I_2(9+3i)$. We have $9+3i=3(1+i)(2-i)$, so $\delta_2(9+3i)=N(1)+N(3)+N(1+i)+N(2-i)+N(3(1+i))+N(3(2-i))+N((1+i)(2-i))+N(3(1+i)(2-i))=1+9+2+5+18+45+10+90=180$. Then $\displaystyle{I_2(9+3i)=\frac{180}{N(3(1+i)(2-i))}=2}$, so $9+3i$ is $2$-powerfully perfect in $\mathcal O_{\mathbb{Q}(\sqrt{-1})}$.  
\par 
We omit the (fairly simplistic) proof of the following theorem because it is included in \cite{Defant14}. 
\begin{theorem} \label{Thm1.1}
Let $n\!\in\!\mathbb{N}$, $d\!\in\! K$, and $z_1, z_2, \pi\in\mathcal O_{\mathbb{Q}(\sqrt{d})}\backslash\{0\}$ with $\pi$ a prime. Then, if we are working in the ring $\mathcal O_{\mathbb{Q}(\sqrt{d})}$, the following statements are true. 
\begin{enumerate}[(a)] 
\item The range of $I_n$ is a subset of the interval $[1,\infty)$, and $I_n(z_1)=1$ if and only if $z_1$ is a unit in $\mathcal O_{\mathbb{Q}(\sqrt{d})}$. If $n$ is even, then $I_n(z_1)\in\mathbb{Q}$. 
\item $I_n$ is multiplicative.  
\item $I_n(z_1)=\delta_{-n}(z_1)$. 
\item If $z_1\vert z_2$, then $I_n(z_1)\leq I_n(z_2)$, with equality if and only if $z_1\sim z_2$. 
\end{enumerate}  	
\end{theorem}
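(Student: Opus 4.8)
The plan is to deduce all four parts from two facts established earlier: the multiplicativity of $\delta_m$ for every integer $m$ (proved in \cite{Defant14}), and the observation recorded just before Definition \ref{Def1.1} that every nonzero element of $\mathcal{O}_{\mathbb{Q}(\sqrt{d})}$ is associated to a unique element of $A(d)$. Together with the multiplicativity of the norm, these make each part routine once (c) is in hand, so I would prove (c) first.

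For (c), expand the definition: $I_n(z_1)=|z_1|^{-n}\sum_{x\mid z_1,\ x\in A(d)}|x|^{n}=\sum_{x\mid z_1,\ x\in A(d)}\bigl(|x|/|z_1|\bigr)^{n}$. For each such $x$, the quotient $z_1/x$ is again a divisor of $z_1$, hence is associated to a unique element $x^{*}\in A(d)$, and $|x^{*}|=|z_1|/|x|$. The assignment $x\mapsto x^{*}$ is an involution of the finite set $\{x\in A(d):x\mid z_1\}$ (one checks $(x^{*})^{*}=x$ using that $x\in A(d)$), so re-indexing the sum by $x^{*}$ gives $I_n(z_1)=\sum_{x^{*}\mid z_1,\ x^{*}\in A(d)}|x^{*}|^{-n}=\delta_{-n}(z_1)$, which is (c). Part (b) then follows immediately: when $x$ and $y$ are coprime, $|xy|^{n}=|x|^{n}|y|^{n}$, so $I_n(xy)=\delta_n(xy)/(|x|^{n}|y|^{n})=I_n(x)I_n(y)$ by multiplicativity of $\delta_n$; equivalently, one may invoke multiplicativity of $\delta_{-n}$ together with (c).

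For (a), use the formula from (c). Since $|x|^{-n}=N(x)^{-n/2}$ and each $N(x)$ is a positive integer, $I_n(z_1)=\sum_{x\mid z_1,\ x\in A(d)}N(x)^{-n/2}$ is a finite sum of positive terms, one of which — coming from $x=1\in A(d)$ — equals $1$; hence $1\le I_n(z_1)<\infty$. If $z_1$ is a unit, then $1$ is its only divisor lying in $A(d)$, so $I_n(z_1)=1$; if $z_1$ is a non-unit, it has a prime divisor whose associate in $A(d)$ contributes a second, strictly positive term, so $I_n(z_1)>1$. When $n$ is even, $n/2\in\mathbb{Z}$, so every term $N(x)^{-n/2}$ is rational and therefore so is $I_n(z_1)$. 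For (d), if $z_1\mid z_2$ then every divisor of $z_1$ divides $z_2$, so $\{x\in A(d):x\mid z_1\}\subseteq\{x\in A(d):x\mid z_2\}$, and, all terms being positive, $I_n(z_1)=\delta_{-n}(z_1)\le\delta_{-n}(z_2)=I_n(z_2)$. Equality forces the two divisor sets to coincide; but the element of $A(d)$ associated to $z_2$ always divides $z_2$, and it divides $z_1$ only if $z_2\mid z_1$, i.e.\ only if $z_1\sim z_2$. Conversely, associated numbers have identical divisor sets, so $z_1\sim z_2$ does yield equality.

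The one step demanding genuine care is the re-indexing in (c): one must verify that $x\mapsto x^{*}$ is well defined and self-inverse on the finite set of divisors-in-$A(d)$ of $z_1$, which is exactly what the uniqueness-of-associates property of $A(d)$ provides. Everything else is bookkeeping with the multiplicativity of $N$ and of $\delta_m$.
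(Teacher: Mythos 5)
Your proof is correct, and it is the natural argument: the divisor-complement bijection $x\mapsto x^{*}$ (justified by the uniqueness of the associate in $A(d)$) gives (c), from which (a), (b), and (d) follow using multiplicativity of $\delta_n$ and positivity of the terms $N(x)^{-n/2}$. The paper itself omits the proof, deferring to \cite{Defant14}, so there is nothing to compare beyond noting that your route is the standard one and all the delicate points (the involution being well defined, $1\in A(d)$, and the equality analysis in (d)) are handled correctly.
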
 
Henceforth, we will focus on the existence of $n$-powerfully $t$-perfect numbers for $n\neq 2$. 

\section{Exploring $n$-powerfully $t$-perfect Numbers for $n\neq 2$} 
We begin this section with a theorem (after two short lemmata) that dramatically limits the number of possibilities that we may consider when dealing with $n$-powerfully $t$-perfect numbers. 
\begin{lemma} \label{Lem2.1}
Let $d\in K$, and let $n\in\mathbb{N}$. If $z\in\mathcal O_{\mathbb{Q}(\sqrt{d})}\backslash\{0\}$ and $n\geq 3$, then $\displaystyle{I_n(z)<\zeta\left(\frac{n}{2}\right)^2}$, where $\zeta$ denotes the Riemann zeta function.   
\end{lemma}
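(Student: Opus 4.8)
The plan is to bound $I_n(z)=\delta_{-n}(z)$ by an Euler-type product over the primes of $\mathcal O_{\mathbb{Q}(\sqrt{d})}$ and then compare that product term by term with the Euler product for $\zeta(n/2)^2$. If $z$ is a unit then $I_n(z)=1<\zeta(n/2)^2$, so assume $z$ is not a unit. Using that $I_n$ is multiplicative (Theorem \ref{Thm1.1}(b)), that $I_n=\delta_{-n}$ (Theorem \ref{Thm1.1}(c)), and that $\vert\pi\vert^n=N(\pi)^{n/2}$, one writes
\[I_n(z)=\prod_{\substack{\pi\mid z\\\pi\in A(d)}}\ \sum_{j=0}^{\rho_\pi(z)}N(\pi)^{-jn/2}.\]
Since $n\geq 3$ forces $n/2>1$ and every prime satisfies $N(\pi)\geq 2$, each inner sum is a proper partial sum of a convergent geometric series, whence
\[I_n(z)<\prod_{\substack{\pi\mid z\\\pi\in A(d)}}\frac{1}{1-N(\pi)^{-n/2}},\]
and this inequality is strict because $z$ has at least one prime divisor.

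Next I would evaluate the product on the right by grouping the primes $\pi\in A(d)$ that divide $z$ according to the integer prime lying beneath them, using the classification in Facts \ref{Fact1.1} and \ref{Fact1.2}. An inert integer prime $p$ has a single associate class of primes above it, of norm $p^2$, contributing the factor $(1-p^{-n})^{-1}$; a ramified $p$ has a single class, of norm $p$, contributing $(1-p^{-n/2})^{-1}$; and a split $p$ has two non-associated classes, each of norm $p$, contributing $(1-p^{-n/2})^{-2}$. Comparing with $\zeta(n/2)^2=\prod_p(1-p^{-n/2})^{-2}$ (valid since $n/2>1$), it suffices to check that in each of the three cases the contribution over $p$ is at most $(1-p^{-n/2})^{-2}$. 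The ramified and split cases are immediate; for the inert case the point is the factorization $1-p^{-n}=(1-p^{-n/2})(1+p^{-n/2})\geq(1-p^{-n/2})^2$, so that $(1-p^{-n})^{-1}\leq(1-p^{-n/2})^{-2}$. Since this bound holds for every integer prime, any finite subproduct of $\prod_{\pi\in A(d)}(1-N(\pi)^{-n/2})^{-1}$ is at most $\zeta(n/2)^2$; in particular $\prod_{\pi\mid z,\ \pi\in A(d)}(1-N(\pi)^{-n/2})^{-1}\leq\zeta(n/2)^2$, and combining with the strict inequality above gives $I_n(z)<\zeta(n/2)^2$.

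I do not anticipate a serious obstacle; the crux is simply observing that an inert prime, whose norm is a square, is dominated by the ``two copies of $p$'' that $\zeta(n/2)^2$ supplies, via the trivial inequality $1+p^{-n/2}\geq 1-p^{-n/2}$. The only points requiring care are the convergence bookkeeping --- this is exactly why the hypothesis $n\geq 3$, i.e. $n/2>1$, is needed --- and the appeal to Facts \ref{Fact1.1} and \ref{Fact1.2} to be certain that the primes of $A(d)$ lying above a given integer prime are counted with the correct multiplicity.
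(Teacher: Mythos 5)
Your proposal is correct and follows essentially the same route as the paper: expand $I_n(z)$ multiplicatively into geometric sums over the primes dividing $z$, bound these by full geometric series, classify primes via Facts \ref{Fact1.1} and \ref{Fact1.2} according to the integer prime below, and compare with the Euler product for $\zeta\left(\frac{n}{2}\right)^2$, the inert case being handled by the observation that $(1-p^{-n})^{-1}\leq(1-p^{-n/2})^{-2}$, which is exactly the paper's comparison of $\sum_j q^{-jn}$ with $\left(\sum_j q^{-jn/2}\right)^2$. The only differences are cosmetic (closed forms for the geometric series and comparing a finite subproduct per integer prime rather than first passing to the infinite product over all primes of the ring).
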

\begin{proof}
Let $\Psi(z)$ be the set of primes in $A(d)$ that divide $z$, and let $\Phi$ be the set of primes in $A(d)$. By parts $(b)$ and $(c)$ of Theorem \ref{Thm1.1}, as well as Fact \ref{Fact1.2}, we may write 
\[I_n(z)=\prod_{\pi\in\Psi(z)}\left(\sum_{j=0}^{\rho_\pi(z)}\frac{1}{\vert\pi^j\vert^n}\right)\]
\[=\prod_{\substack{\pi\in\Psi(z)\\ \vert\pi\vert\in\mathbb{N}}}\left(\sum_{j=0}^{\rho_\pi(z)}\frac{1}{\vert\pi^j\vert^n}\right)  \prod_{\substack{\pi\in\Psi(z)\\ \vert\pi\vert\not\in\mathbb{N} \\ \pi\sim\overline{\pi}}}\left(\sum_{j=0}^{\rho_\pi(z)}\frac{1}{\vert\pi^j\vert^n}\right) \prod_{\substack{\pi\in\Psi(z)\\ \vert\pi\vert\not\in\mathbb{N} \\ \pi\not\sim\overline{\pi}}}\left(\sum_{j=0}^{\rho_\pi(z)}\frac{1}{\vert\pi^j\vert^n}\right)\]
\[<\prod_{\substack{\pi\in\Phi\\ \vert\pi\vert\in\mathbb{N}}}\left(\sum_{j=0}^{\infty}\frac{1}{\vert\pi^j\vert^n}\right)  \prod_{\substack{\pi\in\Phi\\ \vert\pi\vert\not\in\mathbb{N} \\ \pi\sim\overline{\pi}}}\left(\sum_{j=0}^{\infty}\frac{1}{\vert\pi^j\vert^n}\right) \prod_{\substack{\pi\in\Phi\\ \vert\pi\vert\not\in\mathbb{N} \\ \pi\not\sim\overline{\pi}}}\left(\sum_{j=0}^{\infty}\frac{1}{\vert\pi^j\vert^n}\right)\]
\[=\prod_{\substack{q\in\mathbb{P}\\ q\hspace{0.75 mm} is\hspace{0.75 mm} inert}}\left(\sum_{j=0}^{\infty}\frac{1}{q^{jn}}\right)
\prod_{\substack{p\in\mathbb{P}\\ p\hspace{0.75 mm} ramifies}}\left(\sum_{j=0}^{\infty}\frac{1}{\sqrt{p}^{jn}}\right)
\prod_{\substack{p\in\mathbb{P}\\ p\hspace{0.75 mm} splits}}\left(\sum_{j=0}^{\infty}\frac{1}{\sqrt{p}^{jn}}\right)^2\]
\[<\prod_{\substack{q\in\mathbb{P}\\ q\hspace{0.75 mm} is\hspace{0.75 mm} inert}}\left(\sum_{j=0}^{\infty}\frac{1}{\sqrt{q}^{jn}}\right)^2
\prod_{\substack{p\in\mathbb{P}\\ p\hspace{0.75 mm} ramifies}}\left(\sum_{j=0}^{\infty}\frac{1}{\sqrt{p}^{jn}}\right)^2
\prod_{\substack{p\in\mathbb{P}\\ p\hspace{0.75 mm} splits}}\left(\sum_{j=0}^{\infty}\frac{1}{\sqrt{p}^{jn}}\right)^2\]
\[=\prod_{p\in\mathbb{P}}\left(\sum_{j=0}^{\infty}\frac{1}{\sqrt{p}^{jn}}\right)^2=\zeta\left(\frac{n}{2}\right)^2.\]
\end{proof} 
We state the following lemma without proof, though the proof may be found as a corollary of Lemma 3.4 in \cite{Defant14}. 
\begin{lemma} \label{Lem2.2}
Let us fix $d\in K$ and work in the ring $\mathcal O_{\mathbb{Q}(\sqrt{d})}$. Let $n$ be an odd positive integer, and let $z\in\mathcal O_{\mathbb{Q}(\sqrt{d})}\backslash\{0\}$. If $I_n(z)$ is rational, then all primes dividing $z$ are associated to inert integer primes.  
\end{lemma}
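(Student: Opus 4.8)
The plan is to argue directly: assuming $I_n(z)\in\mathbb{Q}$, I will deduce that every prime dividing $z$ is associated to an inert integer prime. The starting point is the multiplicative factorization from parts $(b)$ and $(c)$ of Theorem~\ref{Thm1.1}. Writing $\Psi(z)$ for the set of primes of $A(d)$ dividing $z$, we have
\[I_n(z)=\prod_{\pi\in\Psi(z)}F_\pi,\qquad F_\pi:=\sum_{j=0}^{\rho_\pi(z)}\frac{1}{|\pi|^{jn}} .\]
First I would classify each factor $F_\pi$ using Facts~\ref{Fact1.2} and~\ref{Fact1.3}. If $\pi\sim q$ for an inert integer prime $q$, then $|\pi|=q\in\mathbb{N}$, so $F_\pi$ is a positive rational. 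If instead $N(\pi)=p$ for a ramified or split integer prime $p$, then $|\pi|=\sqrt p$ and $|\pi|^{jn}=p^{jn/2}$; since $n$ is odd, the terms with $j$ even are rational while the terms with $j$ odd are positive rational multiples of $1/\sqrt p$. Thus $F_\pi=A_\pi+B_\pi\sqrt p$ with $A_\pi,B_\pi\in\mathbb{Q}$, where $A_\pi\ge 1$ (from the $j=0$ term) and, crucially, $B_\pi>0$ (the $j=1$ term is present because $\pi\mid z$ forces $\rho_\pi(z)\ge 1$). This is the only step that uses the parity of $n$, and it is precisely what breaks when $n=2$.

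Next I would group the factors by the integer prime lying below them. Let $S$ be the finite set of ramified or split integer primes $p$ for which some $\pi\in\Psi(z)$ satisfies $N(\pi)=p$. Absorbing the inert factors into a single positive rational $r$ and multiplying out, for each $p\in S$, the one or two factors of the form $A_\pi+B_\pi\sqrt p$ (their coefficients being positive rationals, any such product again has positive rational coefficients), I obtain
\[I_n(z)=r\prod_{p\in S}\bigl(a_p+b_p\sqrt p\,\bigr),\qquad a_p,b_p\in\mathbb{Q}_{>0}.\]
It then suffices to show that if $S\neq\varnothing$ this number is irrational, contradicting the hypothesis and forcing $S=\varnothing$, which is exactly the conclusion of the lemma.

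For the final step I would invoke the classical fact that for distinct integer primes $p_1,\dots,p_k$ one has $[\mathbb{Q}(\sqrt{p_1},\dots,\sqrt{p_k}):\mathbb{Q}]=2^k$, so the automorphism $\sigma_i$ of $F:=\mathbb{Q}(\sqrt{p_1},\dots,\sqrt{p_k})$ negating $\sqrt{p_i}$ and fixing the remaining $\sqrt{p_j}$ is well defined. With $S=\{p_1,\dots,p_k\}$ and $P:=\prod_{p\in S}(a_p+b_p\sqrt p)\in F$, if $P$ were rational then $\sigma_i(P)=P$; but $\sigma_i(P)=(a_{p_i}-b_{p_i}\sqrt{p_i})\prod_{j\ne i}(a_{p_j}+b_{p_j}\sqrt{p_j})$, and the trailing product is a nonzero element of the field $F$, so cancelling it yields $a_{p_i}-b_{p_i}\sqrt{p_i}=a_{p_i}+b_{p_i}\sqrt{p_i}$, hence $b_{p_i}=0$, contradicting $b_{p_i}>0$. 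So $I_n(z)=rP$ is irrational whenever $S\neq\varnothing$. I expect the main obstacle to be bookkeeping rather than depth: one must keep track of the strict inequality $B_\pi>0$ (using both that $n$ is odd and that $\rho_\pi(z)\ge 1$), correctly handle the split case in which both $\pi$ and $\overline\pi$ may divide $z$, and cite or reprove the degree-$2^k$ independence of the $\sqrt p$'s; with those in hand, the Galois argument closes things cleanly. If one prefers to avoid Galois theory, the last step can instead be carried out by induction on $|S|$ using only that $\sqrt p\notin\mathbb{Q}(\sqrt{p_1},\dots,\sqrt{p_{k-1}})$ whenever $p$ is a prime distinct from $p_1,\dots,p_{k-1}$.
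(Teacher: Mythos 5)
Your proposal is correct. Note that the paper itself gives no proof of Lemma \ref{Lem2.2}; it is stated without proof and deferred to Lemma 3.4 of \cite{Defant14}, so there is no in-paper argument to compare against, and what you have written is a valid self-contained substitute. The structure is sound: the factorization $I_n(z)=\prod_{\pi\in\Psi(z)}F_\pi$ is exactly the one the paper uses in Lemma \ref{Lem2.1} (via parts (b) and (c) of Theorem \ref{Thm1.1}), your classification of the factors via Facts \ref{Fact1.2} and \ref{Fact1.3} is right, and you correctly isolate the two essential points: $B_\pi>0$ because $\rho_\pi(z)\ge 1$ and $n$ is odd (the $j=1$ term contributes a positive rational multiple of $\sqrt p$), and the grouping over each underlying integer prime $p$ (one or two factors, coefficients staying positive) so that $I_n(z)=r\prod_{p\in S}(a_p+b_p\sqrt p)$ with $r,a_p,b_p\in\mathbb{Q}_{>0}$. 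The concluding irrationality step is the only place where you lean on an ingredient not in the paper, namely $[\mathbb{Q}(\sqrt{p_1},\dots,\sqrt{p_k}):\mathbb{Q}]=2^k$ for distinct integer primes; this is classical but should be cited or proved if the argument is to be fully self-contained, and the elementary induction you sketch (that $\sqrt p\notin\mathbb{Q}(\sqrt{p_1},\dots,\sqrt{p_{k-1}})$ for a new prime $p$) does the job without Galois theory. With that reference supplied, the proof is complete.
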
 
\begin{theorem} \label{Thm2.1}
Let $d\in K$. For any integers $n\geq 3$ and $t\geq 2$, there are no $n$-powerfully $t$-perfect numbers in $\mathcal O_{\mathbb{Q}(\sqrt{d})}$ because $I_n(z)<2$ whenever $z\in\mathcal O_{\mathbb{Q}(\sqrt{d})}\backslash\{0\}$ and $I_n(z)\in\mathbb{Q}$. 
\end{theorem}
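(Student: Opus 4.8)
The plan is to split the analysis according to the parity and size of $n$. The cases of odd $n$ and of even $n\ge 6$ will follow quickly from Lemmas \ref{Lem2.1} and \ref{Lem2.2} together with the fact that $\zeta$ is strictly decreasing on $(1,\infty)$ with $\zeta(3)\approx 1.202<\sqrt{2}$; the case $n=4$ will require a sharper estimate and will be the crux.

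For odd $n\ge 3$ with $I_n(z)\in\mathbb Q$: by Lemma \ref{Lem2.2} every prime dividing $z$ is associated to an inert integer prime, so parts $(b)$ and $(c)$ of Theorem \ref{Thm1.1} and Fact \ref{Fact1.2} express $I_n(z)$ as a finite product of factors $\sum_{j=0}^{\rho_q(z)}q^{-jn}<(1-q^{-n})^{-1}$ over distinct inert integer primes $q$, whence $I_n(z)<\prod_{q\in\mathbb P}(1-q^{-n})^{-1}=\zeta(n)\le\zeta(3)<2$. For even $n\ge 6$, Lemma \ref{Lem2.1} gives directly $I_n(z)<\zeta(n/2)^2\le\zeta(3)^2<2$ (and $I_n(z)\in\mathbb Q$ holds automatically by Theorem \ref{Thm1.1}$(a)$, so no hypothesis is wasted).

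For $n=4$, $I_4(z)$ is automatically rational, but Lemma \ref{Lem2.1} only yields $I_4(z)<\zeta(2)^2=\pi^4/36\approx 2.705$, which is too weak. To do better, let $\Phi$ be the set of primes of $A(d)$; parts $(b)$, $(c)$ of Theorem \ref{Thm1.1} give $I_4(z)<\prod_{\pi\in\Phi}(1-|\pi|^{-4})^{-1}$. Grouping the primes of $A(d)$ over the integer primes beneath them via Facts \ref{Fact1.1}--\ref{Fact1.2} and comparing with $\zeta(2)^2=\prod_{p}(1-p^{-2})^{-2}$, a short computation gives the identity
\[\prod_{\pi\in\Phi}\frac{1}{1-|\pi|^{-4}}=\zeta(2)^2\left(\prod_{q\ \mathrm{inert}}\frac{q^2-1}{q^2+1}\right)\left(\prod_{p\ \mathrm{ramified}}\Bigl(1-\tfrac1{p^2}\Bigr)\right),\]
in which the last two products run over positive factors less than $1$. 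Retaining only finitely many of those factors still yields an upper bound, so for each of the nine rings I would keep just enough of them to push the product below $2$, reading off the splitting type of the small primes from Fact \ref{Fact1.4}: when $2$ is inert (the six values $d\in K\setminus\{-1,-2,-7\}$) the single factor $\tfrac35$ gives $I_4(z)<\tfrac35\zeta(2)^2=\pi^4/60<2$; for $d=-1$ ($2$ ramified, $3$ inert) the factors $\tfrac34\cdot\tfrac8{10}$ give $I_4(z)<\pi^4/60<2$; for $d=-2$ ($2$ ramified, $5$ inert) the factors $\tfrac34\cdot\tfrac{24}{26}$ give $I_4(z)<\pi^4/52<2$; and for $d=-7$ ($2$ split, $7$ ramified, $3$ and $5$ inert) the factors $\tfrac{48}{49}\cdot\tfrac8{10}\cdot\tfrac{24}{26}$ give $I_4(z)<\tfrac{64}{3185}\pi^4<2$.

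I expect the $d=-7$ estimate to be the main obstacle: there the integer prime $2$ splits instead of contributing a savings factor, so one must bring in three further integer primes before the bound falls below $2$, and even then it only reaches about $1.96$. Thus the argument genuinely needs the identity above together with the small-prime case analysis on the splitting type of $2$, rather than any single crude bound on a zeta value; all of the other rings leave comfortable room.
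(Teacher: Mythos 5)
Your proposal is correct and follows essentially the same route as the paper: Lemma \ref{Lem2.1} for the large-$n$ range, Lemma \ref{Lem2.2} for odd $n$, and for $n=4$ the same Euler-product comparison with $\zeta(2)^2$ together with the same case split on the behavior of $2$ (inert, ramified with $d=-1$ or $d=-2$, split with $d=-7$), your identity being just a closed form of the paper's intermediate inequality. The only differences are cosmetic: you partition the cases by parity rather than by $n=3$ versus $n\geq 5$, and in the $d=-7$ case you also keep the ramified prime $7$, giving $\tfrac{64}{3185}\pi^4\approx 1.96$ in place of the paper's tighter-to-$2$ bound $\tfrac{4}{195}\pi^4\approx 1.998$.
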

\begin{proof}
Let $z\in\mathcal O_{\mathbb{Q}(\sqrt{d})}\backslash\{0\}$ be such that $I_n(z)\in\mathbb{Q}$. If $n\geq 5$, the proof follows from Lemma \ref{Lem2.1} because, in that case, we have 
$\displaystyle{I_n(z)<\zeta\left(\frac{5}{2}\right)^2\approx 1.799<2}$.
Now, let $n=3$. Because $n$ is an odd positive integer and $I_n(z)$ is rational, Lemma \ref{Lem2.2} tells us that any prime dividing $z$
must be associated to an inert integer prime. Therefore, 
\[I_n(z)=\prod_{\substack{\pi\vert z \\\ \pi\in A(d)}}\left(\sum_{j=0}^{\rho_\pi(z)}\frac{1}{\vert\pi^j\vert^3}\right)<\prod_{\substack{q\in\mathbb{P} \\ q\hspace{0.75 mm} is\hspace{0.75 mm} inert}}\left(\sum_{j=0}^\infty\frac{1}{q^{3j}}\right)\]
\[<\prod_{q\in\mathbb{P}}\left(\sum_{j=0}^\infty\frac{1}{q^{3j}}\right)=\zeta(3)<2.\]
\par 
The only case left to consider is the case $n=4$. As an intermediate step in the proof of Lemma \ref{Lem2.1}, we arrived at the inequality 
\begin{equation} \label{Eq2.1}
I_n(z)<\prod_{\substack{q\in\mathbb{P}\\ q\hspace{0.75 mm} is\hspace{0.75 mm} inert}}\left(\sum_{j=0}^{\infty}\frac{1}{q^{jn}}\right)
\prod_{\substack{p\in\mathbb{P}\\ p\hspace{0.75 mm} ramifies}}\left(\sum_{j=0}^{\infty}\frac{1}{\sqrt{p}^{jn}}\right)
\prod_{\substack{p\in\mathbb{P}\\ p\hspace{0.75 mm} splits}}\left(\sum_{j=0}^{\infty}\frac{1}{\sqrt{p}^{jn}}\right)^2.
\end{equation}
Substituting $n=4$, we have 
\[I_4(z)<\prod_{\substack{q\in\mathbb{P}\\ q\hspace{0.75 mm} is\hspace{0.75 mm} inert}}\left(\sum_{j=0}^{\infty}\frac{1}{q^{4j}}\right)
\prod_{\substack{p\in\mathbb{P}\\ p\hspace{0.75 mm} ramifies}}\left(\sum_{j=0}^{\infty}\frac{1}{p^{2j}}\right)
\prod_{\substack{p\in\mathbb{P}\\ p\hspace{0.75 mm} splits}}\left(\sum_{j=0}^{\infty}\frac{1}{p^{2j}}\right)^2.\]
Now, suppose that the ring $\mathcal O_{\mathbb{Q}(\sqrt{d})}$ in which we are working is one in which the integer prime $2$ is inert. Then 
\[I_4(z)<\left(\sum_{j=0}^\infty\frac{1}{2^{4j}}\right)\prod_{\substack{q\in\mathbb{P}\\ q\hspace{0.50 mm} is\hspace{0.50 mm} inert \\ q\neq 2}}\left(\sum_{j=0}^{\infty}\frac{1}{q^{4j}}\right)
\prod_{\substack{p\in\mathbb{P}\\ p\hspace{0.75 mm} ramifies}}\left(\sum_{j=0}^{\infty}\frac{1}{p^{2j}}\right)
\prod_{\substack{p\in\mathbb{P}\\ p\hspace{0.75 mm} splits}}\left(\sum_{j=0}^{\infty}\frac{1}{p^{2j}}\right)^2\] 
\[\leq\left(\sum_{j=0}^\infty\frac{1}{2^{4j}}\right)\prod_{\substack{q\in\mathbb{P}\\ q\hspace{0.75 mm} is\hspace{0.75 mm} inert \\ q\neq 2}}\left(\sum_{j=0}^{\infty}\frac{1}{q^{2j}}\right)^2
\prod_{\substack{p\in\mathbb{P}\\ p\hspace{0.75 mm} ramifies}}\left(\sum_{j=0}^{\infty}\frac{1}{p^{2j}}\right)^2
\prod_{\substack{p\in\mathbb{P}\\ p\hspace{0.75 mm} splits}}\left(\sum_{j=0}^{\infty}\frac{1}{p^{2j}}\right)^2\] 
\[=\left(\sum_{j=0}^\infty\frac{1}{2^{4j}}\right)\prod_{\substack{p\in\mathbb{P}\\ p\neq 2}}\left(\sum_{j=0}^\infty\frac{1}{p^{2j}}\right)^2 =\zeta(2)^2\left(\sum_{j=0}^\infty\frac{1}{2^{2j}}\right)^{-2}\left(\sum_{j=0}^\infty\frac{1}{2^{4j}}\right)\]
\[=\frac{\pi^4}{36}\cdot\frac{9}{16}\cdot\frac{16}{15}=\frac{\pi^4}{60}<2.\]
Now, recall from Fact \ref{Fact1.4} that the only $d\in K$ for which $2$ is not inert in $\mathcal O_{\mathbb{Q}(\sqrt{d})}$ are $d=-1$, $d=-2$, and $d=-7$. If $d=-1$, then $2$ ramifies and $3$ is inert. Therefore, if we write $\displaystyle{\mathcal H=\left(\sum_{j=0}^\infty\frac{1}{2^{2j}}\right)\left(\sum_{j=0}^\infty\frac{1}{3^{4j}}\right)}$, then
\[I_4(z)<\mathcal H\prod_{\substack{q\in\mathbb{P}\\ q\hspace{0.75 mm} is\hspace{0.75 mm} inert \\ q\neq 3}}\left(\sum_{j=0}^{\infty}\frac{1}{q^{4j}}\right)
\prod_{\substack{p\in\mathbb{P}\\ p\hspace{0.75 mm} ramifies \\ p\neq 2}}\left(\sum_{j=0}^{\infty}\frac{1}{p^{2j}}\right)
\prod_{\substack{p\in\mathbb{P}\\ p\hspace{0.75 mm} splits}}\left(\sum_{j=0}^{\infty}\frac{1}{p^{2j}}\right)^2\]
\[\leq\mathcal H\prod_{\substack{p\in\mathbb{P} \\ p\not\in
\{2, 3\}}}\left(\sum_{j=0}^\infty\frac{1}{p^{2j}}\right)^2=\zeta(2)^2\left(\sum_{j=0}^\infty\frac{1}{2^{2j}}\right)^{-1}\left(\sum_{j=0}^\infty
\frac{1}{3^{2j}}\right)^{-2}\left(\sum_{j=0}^\infty\frac{1}{3^{4j}}\right)\]
\[=\frac{\pi^4}{36}\cdot
\frac{3}{4}\cdot\frac{64}{81}\cdot\frac{81}{80}=\frac{\pi^4}{60}<2.\]
Similarly, if $d=-2$, then $2$ ramifies and $5$ is inert. Therefore, we may replace all of the $3$'s in the above chain of inequalities with $5$'s to arrive at
\[I_4(z)<\zeta(2)^2\left(\sum_{j=0}^\infty\frac{1}{2^{2j}}\right)^{-1}\left(\sum_{j=0}^\infty
\frac{1}{5^{2j}}\right)^{-2}\left(\sum_{j=0}^\infty\frac{1}{5^{4j}}\right)\]
\[=\frac{\pi^4}{36}\cdot
\frac{3}{4}\cdot\frac{576}{625}\cdot\frac{625}{624}=\frac{\pi^4}{52}<2.\]
Finally, we consider the case $d=-7$. In this case, $3$ and $5$ are both inert. If we write $\displaystyle{\mathcal J=\left(\sum_{j=0}^\infty\frac{1}{3^{4j}}\right)\left(\sum_{j=0}^\infty\frac{1}{5^{4j}}\right)}$, then
\[I_4(z)<\mathcal J\prod_{\substack{q\in\mathbb{P}\\ q\hspace{0.75 mm} is\hspace{0.75 mm} inert \\ q\not\in\{3, 5\}}}\left(\sum_{j=0}^{\infty}\frac{1}{q^{4j}}\right)
\prod_{\substack{p\in\mathbb{P}\\ p\hspace{0.75 mm} ramifies}}\left(\sum_{j=0}^{\infty}\frac{1}{p^{2j}}\right)
\prod_{\substack{p\in\mathbb{P}\\ p\hspace{0.75 mm} splits}}\left(\sum_{j=0}^{\infty}\frac{1}{p^{2j}}\right)^2\]
\[\leq\mathcal J\prod_{\substack{p\in\mathbb{P} \\ p\not\in
\{3, 5\}}}\left(\sum_{j=0}^\infty\frac{1}{p^{2j}}\right)^2=\mathcal J\hspace{0.75 mm}\zeta(2)^2\left(\sum_{j=0}^\infty\frac{1}{3^{2j}}\right)^{-2}\left(\sum_{j=0}^\infty
\frac{1}{5^{2j}}\right)^{-2}\]
\[=\frac{4\pi^4}{195}<2.\] 
This completes the final case. 
\end{proof}
\begin{theorem} \label{Thm2.2} 
Let $d\in K$. A number $z\in\mathcal O_{\mathbb{Q}(\sqrt{d})}\backslash\{0\}$ satisfies $I_1(z)=b\in\mathbb{Q}$ if and only if it is associated to an integer whose (traditional) abundancy index is $b$ and whose prime factors (in $\mathbb{Z}$) are all inert in $\mathcal O_{\mathbb{Q}(\sqrt{d})}$. 
\end{theorem}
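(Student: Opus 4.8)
The plan is to reduce both implications to a single observation: $I_1$ agrees with the classical abundancy index $I$ on every positive integer all of whose prime factors are inert, after which multiplicativity of $I_1$ (Theorem~\ref{Thm1.1}(b)) and its invariance under association (Theorem~\ref{Thm1.1}(d)) finish the job.

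First I would record the key computation. Let $q$ be an integer prime that is inert in $\mathcal O_{\mathbb{Q}(\sqrt{d})}$ and let $\alpha\in\mathbb{N}$. Then $q$ is a prime of the ring, so, up to association, the divisors of $q^\alpha$ are exactly $q^0,q^1,\dots,q^\alpha$; since $|q^j|=\sqrt{N(q^j)}=q^j$ and associated divisors have equal absolute value, Definition~\ref{Def1.1} gives $\delta_1(q^\alpha)=\sum_{j=0}^{\alpha}q^j=\sigma_1(q^\alpha)$, whence $I_1(q^\alpha)=\delta_1(q^\alpha)/|q^\alpha|=\sigma_1(q^\alpha)/q^\alpha=I(q^\alpha)$. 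As $I_1$ and $I$ are both multiplicative, it follows that $I_1(m)=I(m)$ for every positive integer $m$ whose prime factors (in $\mathbb{Z}$) are all inert in $\mathcal O_{\mathbb{Q}(\sqrt{d})}$.

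For the ``if'' direction, suppose $z$ is associated to an integer $m$ with $I(m)=b$ and every prime factor of $m$ inert; replacing $m$ by $-m$ if necessary (legitimate since $-1$ is a unit), we may take $m>0$. Then Theorem~\ref{Thm1.1}(d) gives $I_1(z)=I_1(m)$, and the key computation gives $I_1(m)=I(m)=b\in\mathbb{Q}$. For the ``only if'' direction, suppose $I_1(z)=b\in\mathbb{Q}$. Since $n=1$ is odd, Lemma~\ref{Lem2.2} applies and tells us that every prime dividing $z$ is associated to an inert integer prime. Writing $z$ as a unit times $\prod_i\pi_i^{\alpha_i}$ with the $\pi_i$ pairwise non-associated primes of the ring, and choosing for each $i$ an inert integer prime $q_i\sim\pi_i$ (the $q_i$ are then automatically distinct), we obtain $z\sim m$, where $m=\prod_i q_i^{\alpha_i}$ is a positive integer all of whose prime factors are inert. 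By Theorem~\ref{Thm1.1}(d) and the key computation, $b=I_1(z)=I_1(m)=I(m)$, so $m$ is an integer of the required form.

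I do not expect a genuine obstacle here: the forward implication rests entirely on the already-quoted Lemma~\ref{Lem2.2}, and the only point demanding care is the bookkeeping in the key computation — namely, that an inert integer prime has the same divisors (up to association) and the same absolute values in $\mathcal O_{\mathbb{Q}(\sqrt{d})}$ as in $\mathbb{Z}$, so that passing between the two rings changes neither $\delta_1$ nor $|\cdot|$ on such numbers.
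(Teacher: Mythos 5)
Your proof is correct and follows essentially the same route as the paper: Lemma~\ref{Lem2.2} handles the forward direction, and both directions reduce to the agreement $I_1(m)=I(m)$ for positive integers $m$ with only inert prime factors. The only difference is that you spell out that agreement via the prime-power computation $\delta_1(q^\alpha)=\sigma_1(q^\alpha)$ and multiplicativity, a detail the paper simply asserts.
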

\begin{proof}
Suppose $z\in\mathcal O_{\mathbb{Q}(\sqrt{d})}\backslash\{0\}$ satisfies $I_1(z)=b\in\mathbb{Q}$. If $b=1$, then the desired result is clear because $z\sim 1$. Therefore, we may assume $b>1$. Lemma \ref{Lem2.2} tells us that all primes dividing $z$ are associated to inert integer primes, which implies that $z$ is associated to an integer whose prime factors (in $\mathbb{Z}$) are all inert in $\mathcal O_{\mathbb{Q}(\sqrt{d})}$. We may, therefore, write $z\sim r$ for some $r\in\mathbb{N}$. As all primes dividing $r$ are associated to inert integer primes, we have $I_1(r)=I(r)$, where $I$ is the traditional abundancy index defined over $\mathbb{N}$. Therefore, $I(r)=I_1(r)=I_1(z)=b$. 
\par 
Conversely, if $z\sim r$, where $r$ is an integer whose (traditional) abundancy index is a rational number $b$ and whose prime factors (in $\mathbb{Z}$) are all inert in $\mathcal O_{\mathbb{Q}(\sqrt{d})}$, then $I_1(z)=I_1(r)=I(r)=b\in\mathbb{Q}$. 
\end{proof}
\begin{corollary} \label{Cor2.1}
Let $d\in K$. A number $z\in\mathcal O_{\mathbb{Q}(\sqrt{d})}\backslash\{0\}$ is a $t$-perfect number in $\mathcal O_{\mathbb{Q}(\sqrt{d})}$ if and only if it is associated to an integer that is $t$-perfect in $\mathbb{Z}$ and whose prime factors (in $\mathbb{Z}$) are all inert in $\mathcal O_{\mathbb{Q}(\sqrt{d})}$.
\end{corollary}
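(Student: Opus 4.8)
The plan is to read this off directly from Theorem \ref{Thm2.2}, since $t$-perfection is nothing more than the special case in which the prescribed rational abundancy index happens to be an integer $t \geq 2$. First I would recall from Definition \ref{Def1.2} that, by convention, ``$z$ is $t$-perfect in $\mathcal O_{\mathbb{Q}(\sqrt{d})}$'' means exactly $I_1(z) = t$, and that, by the terminology set up in the introduction, an integer $r$ is $t$-perfect in $\mathbb{Z}$ precisely when its traditional abundancy index satisfies $I(r) = t$. Since any integer $t \geq 2$ lies in $\mathbb{Q}$, the hypotheses of Theorem \ref{Thm2.2} are available with $b = t$.

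For the forward implication, suppose $z$ is $t$-perfect in $\mathcal O_{\mathbb{Q}(\sqrt{d})}$, so $I_1(z) = t \in \mathbb{Q}$. Applying Theorem \ref{Thm2.2} with $b = t$, we obtain that $z$ is associated to an integer $r$ all of whose prime factors (in $\mathbb{Z}$) are inert in $\mathcal O_{\mathbb{Q}(\sqrt{d})}$ and with $I(r) = t$; as $t$ is an integer $\geq 2$, the latter is precisely the assertion that $r$ is $t$-perfect in $\mathbb{Z}$. Conversely, if $z \sim r$ for an integer $r$ that is $t$-perfect in $\mathbb{Z}$ and whose prime factors are all inert, then $I(r) = t \in \mathbb{Q}$, so the reverse direction of Theorem \ref{Thm2.2} (again with $b = t$) gives $I_1(z) = t$, i.e.\ $z$ is $t$-perfect in $\mathcal O_{\mathbb{Q}(\sqrt{d})}$.

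I do not expect any genuine obstacle: all the substantive work — in particular the use of Lemma \ref{Lem2.2} to force every prime dividing $z$ to be associated to an inert integer prime, and the resulting identity $I_1(r) = I(r)$ — is already contained in Theorem \ref{Thm2.2}. The only thing left to verify is the purely definitional equivalence between ``$I(r) = t$ for an integer $t \geq 2$'' and ``$r$ is $t$-perfect in $\mathbb{Z}$,'' which requires no computation.
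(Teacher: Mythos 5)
Your proposal is correct and is exactly the paper's argument: the paper's proof of Corollary \ref{Cor2.1} consists of setting $b=t$ in Theorem \ref{Thm2.2}, which is what you do, with the only addition being the harmless definitional remark that $I(r)=t$ for an integer $t\geq 2$ means $r$ is $t$-perfect in $\mathbb{Z}$.
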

\begin{proof}
Simply set $b=t$ in Theorem \ref{Thm2.2}.
\end{proof}
Let us now restrict the scope of our exploration to perfect numbers in a ring $\mathcal O_{\mathbb{Q}(\sqrt{d})}$ ($d\!\in\! K$). That is, we will search for $n$-powerfully $t$-perfect numbers with $n=1$ and $t=2$. We will repeatedly make use of Corollary \ref{Cor2.1} and the following two well-known facts about perfect numbers in $\mathbb{Z}$ \cite{Nguyen00, Strayer02}.
\begin{fact} \label{Fact2.1}
A positive integer $r$ is an even perfect number (in $\mathbb{Z}$) if and only if $r=2^{p-1}(2^p-1)$ for some Mersenne prime $2^p-1$.
\end{fact}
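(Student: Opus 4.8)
The statement is the classical Euclid--Euler theorem, and the plan is to prove the two implications separately, using only multiplicativity of $\sigma_1$ together with the formula $\sigma_1(p^\alpha)=(p^{\alpha+1}-1)/(p-1)$ recorded in the introduction.

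For the ``if'' direction, I would suppose $2^p-1$ is an integer prime and put $r=2^{p-1}(2^p-1)$. Since $2^{p-1}$ and $2^p-1$ are coprime in $\mathbb{Z}$, multiplicativity gives $\sigma_1(r)=\sigma_1(2^{p-1})\,\sigma_1(2^p-1)=(2^p-1)\cdot 2^p=2r$, so $I(r)=2$ and $r$ is perfect. This direction is entirely routine.

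The content lies in the converse. I would suppose $r$ is an even perfect number and write $r=2^k m$ with $k\geq 1$ and $m$ odd. From $\sigma_1(r)=2r$ and multiplicativity one gets $(2^{k+1}-1)\,\sigma_1(m)=2^{k+1}m$. Because $\gcd(2^{k+1}-1,2^{k+1})=1$ in $\mathbb{Z}$, it follows that $2^{k+1}-1$ divides $m$ in $\mathbb{Z}$; writing $m=(2^{k+1}-1)M$ yields $\sigma_1(m)=2^{k+1}M=m+M$. The crux is to show $M=1$: if instead $M>1$, then $1$, $M$, and $m$ are three distinct positive divisors of $m$, so $\sigma_1(m)\geq 1+M+m>m+M$, a contradiction. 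Hence $M=1$, so $m=2^{k+1}-1$ and $\sigma_1(m)=2^{k+1}=m+1$, which forces $m$ to be an integer prime; setting $p=k+1$ gives $r=2^{p-1}(2^p-1)$ with $2^p-1$ a Mersenne prime.

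I do not expect a serious obstacle here; the only point worth a remark is that ``Mersenne prime'' is usually taken to carry the additional requirement that the exponent $p$ itself be prime, and this comes for free from the factorization $2^{ab}-1=(2^a-1)(2^{a(b-1)}+2^{a(b-2)}+\cdots+1)$, which shows that $2^p-1$ prime forces $p$ prime. Everything else is a two-line deduction from multiplicativity and from counting the divisors of $m$, so the argument is short.
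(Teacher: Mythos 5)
Your proof is correct: it is the standard Euclid--Euler argument, with both directions handled exactly as in the textbook treatments (the ``if'' direction by multiplicativity of $\sigma_1$, the converse by writing $r=2^k m$, deducing $2^{k+1}-1\mid m$, and ruling out $M>1$ by counting the divisors $1$, $M$, $m$). The paper itself states Fact \ref{Fact2.1} without proof, citing the references \cite{Nguyen00, Strayer02}, where essentially this same argument appears, so your proposal fills the gap in precisely the expected way.
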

\begin{fact} \label{Fact2.2}
If $r$ is an odd integer that is perfect in $\mathbb{Z}$ (assuming such a number exists), then $r=p^km^2$, where $p$ is an integer prime and $k,m\in\mathbb{N}$. Furthermore, $p\equiv k\equiv 1\imod{4}$, $m>1$, and $p\nmid m$ in $\mathbb{Z}$. The expression $p^km^2$ is known as the Eulerian form of the odd perfect number $r$.   
\end{fact}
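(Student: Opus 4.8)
The plan is to extract the entire structure of $r$ from the defining equation $\sigma_1(r)=2r$ by tracking $2$-adic valuations, and then to sharpen the congruence data through a parity/mod-$4$ analysis of $\sigma_1$ of a prime power. Write the prime factorization in $\mathbb{Z}$ as $r=\prod_{i=1}^{s}q_i^{a_i}$ with $q_1,\dots,q_s$ distinct odd integer primes. Since $\sigma_1$ is multiplicative, $\sigma_1(r)=\prod_{i=1}^{s}\sigma_1(q_i^{a_i})$, and since $r$ is odd, $\upsilon_2(\sigma_1(r))=\upsilon_2(2r)=1$.

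First I would note that for any odd prime $q$ and any integer $a\ge 1$, $\sigma_1(q^a)=1+q+\cdots+q^a\equiv a+1\imod{2}$, so $\sigma_1(q^a)$ is even exactly when $a$ is odd. Since $1=\upsilon_2(\sigma_1(r))=\sum_{i=1}^{s}\upsilon_2(\sigma_1(q_i^{a_i}))$ is a sum of nonnegative integers equal to $1$, exactly one summand equals $1$ and all others vanish; in particular exactly one exponent, say $a_1$, is odd. Put $p:=q_1$ and $k:=a_1$, so that $\upsilon_2(\sigma_1(p^k))=1$, while $a_2,\dots,a_s$ are all even. Setting $m:=\prod_{i\ge 2}q_i^{a_i/2}$ then gives $r=p^k m^2$ with $k,m\in\mathbb{N}$; moreover $p\nmid m$ in $\mathbb{Z}$ because $p=q_1$ is distinct from $q_2,\dots,q_s$, and $m$ is odd since $r$ is.

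Next I would pin down the two congruences. Because $k$ is odd, the $k+1$ terms of $\sigma_1(p^k)$ group into consecutive pairs, giving $\sigma_1(p^k)=(1+p)\bigl(1+p^2+p^4+\cdots+p^{k-1}\bigr)$. If $p\equiv 3\imod{4}$ then $4\mid 1+p$, forcing $\upsilon_2(\sigma_1(p^k))\ge 2$ and contradicting $\upsilon_2(\sigma_1(p^k))=1$; hence $p\equiv 1\imod{4}$ and $\upsilon_2(1+p)=1$. It follows that the second factor $1+p^2+p^4+\cdots+p^{k-1}$ must be odd; since $p$ is odd, each $p^{2j}$ is odd, so this factor has the parity of its number of summands $\tfrac{k+1}{2}$, and $\tfrac{k+1}{2}$ being odd is equivalent to $k\equiv 1\imod{4}$.

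Finally, to see $m>1$, suppose $m=1$; then $r=p^k$ and $I(r)=\sigma_1(p^k)/p^k=\sum_{j=0}^{k}p^{-j}<\dfrac{p}{p-1}\le\dfrac{3}{2}<2$, contradicting perfection, so $m>1$. The main obstacle, such as it is, is purely bookkeeping: one must be careful that a sum of nonnegative integers equal to $1$ really does force a single summand equal to $1$ with all others zero (this is exactly what simultaneously yields ``exactly one odd exponent'' and $\upsilon_2(\sigma_1(p^k))=1$), and that the pairing $\sigma_1(p^k)=(1+p)(1+p^2+\cdots+p^{k-1})$ is legitimate precisely because $k$ is odd. Everything else is a routine parity check.
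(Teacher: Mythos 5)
Your proof is correct. Note that the paper does not prove this statement at all: it is quoted as a known result (Euler's theorem on the form of an odd perfect number) with citations to Strayer and Nguyen, so there is no internal proof to compare against. Your argument is the classical one and every step checks out: from $\upsilon_2(\sigma_1(r))=\upsilon_2(2r)=1$ and the parity rule $\sigma_1(q^a)\equiv a+1\imod{2}$ for odd $q$, the sum $\sum_i\upsilon_2(\sigma_1(q_i^{a_i}))=1$ indeed forces exactly one odd exponent with $\upsilon_2(\sigma_1(p^k))=1$, giving the decomposition $r=p^km^2$ with $p\nmid m$; the factorization $\sigma_1(p^k)=(1+p)\bigl(1+p^2+\cdots+p^{k-1}\bigr)$ (valid precisely because $k$ is odd) rules out $p\equiv 3\imod 4$, and the parity count of the $\tfrac{k+1}{2}$ odd summands in the second factor correctly yields $k\equiv 1\imod 4$; the estimate $I(p^k)<\tfrac{p}{p-1}\le\tfrac{3}{2}<2$ settles $m>1$ (in fact, since you have already shown $p\equiv 1\imod 4$, you could cite $p\ge 5$ and the sharper bound $\tfrac{5}{4}$, but $p\ge 3$ suffices). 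If anything, your write-up proves slightly more than the paper needs in one place and exactly what it needs elsewhere: the paper's proof of Theorem \ref{Thm2.4} uses precisely the consequence $p+1\mid\sum_{j=0}^k p^j$ that your pairing makes explicit, so your argument is fully compatible with how Fact \ref{Fact2.2} is used downstream.
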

\begin{theorem} \label{Thm2.3}
There are no perfect numbers in $\mathcal O_{\mathbb{Q}(\sqrt{-1})}$, the ring of Gaussian integers. 
\end{theorem}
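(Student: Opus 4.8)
The plan is to reduce everything to a statement about ordinary perfect numbers via Corollary \ref{Cor2.1}, and then dispatch the even and odd cases separately using the structure theorems (Facts \ref{Fact2.1} and \ref{Fact2.2}) together with the splitting behavior of integer primes in $\mathbb{Z}[i]$ described in Fact \ref{Fact1.4}. By Corollary \ref{Cor2.1}, a perfect number in $\mathcal O_{\mathbb{Q}(\sqrt{-1})}$ exists if and only if there is an integer $r$ that is perfect in $\mathbb{Z}$ and all of whose integer prime factors are inert in $\mathbb{Z}[i]$. So it suffices to show no such $r$ can exist.

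First I would record which integer primes are inert in $\mathbb{Z}[i]$. Taking $d=-1$ in Fact \ref{Fact1.4}: the prime $2$ ramifies, and an odd integer prime $p$ is inert precisely when $-1$ is a quadratic nonresidue modulo $p$, i.e. precisely when $p\equiv 3\imod 4$; the odd primes $p\equiv 1\imod 4$ split. In particular, the only integer primes that are inert in $\mathbb{Z}[i]$ are those congruent to $3$ modulo $4$.

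Next, the even case: if $r$ is an even perfect number, then by Fact \ref{Fact2.1} we have $r=2^{p-1}(2^p-1)$ for some Mersenne prime $2^p-1$, so $2\mid r$ in $\mathbb{Z}$. Since $2$ ramifies (hence is not inert) in $\mathbb{Z}[i]$, $r$ fails the inertness requirement. For the odd case: if $r$ is an odd perfect number, then by Fact \ref{Fact2.2} its Eulerian form is $r=p^k m^2$ with $p\equiv 1\imod 4$. Because $p\equiv 1\imod 4$, the prime $p$ splits in $\mathbb{Z}[i]$ and is therefore not inert, so again $r$ has a non-inert prime factor. Either way, no perfect $r\in\mathbb{Z}$ has all its prime factors inert in $\mathbb{Z}[i]$, and Corollary \ref{Cor2.1} then yields the theorem.

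I do not expect a genuine obstacle here; the argument is short once the reduction is in place. The one point worth flagging is that the odd case rests entirely on the nontrivial structural fact that any (hypothetical) odd perfect number carries a ``special'' prime $p\equiv 1\imod 4$ in its Eulerian form — it is exactly this congruence that forces $p$ to split rather than remain inert — so the conclusion is unconditional even though the existence of odd perfect numbers is open.
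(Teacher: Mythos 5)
Your proof is correct and follows essentially the same route as the paper: reduce via Corollary \ref{Cor2.1} to an ordinary perfect number all of whose prime factors are inert in $\mathbb{Z}[i]$, identify the inert primes as those congruent to $3$ modulo $4$ via Fact \ref{Fact1.4}, and derive a contradiction from the special prime $p\equiv 1\imod 4$ in the Eulerian form of Fact \ref{Fact2.2}. The only cosmetic difference is your explicit even case invoking Fact \ref{Fact2.1}, which is not needed: since $2\not\equiv 3\imod 4$, the inertness requirement already forces $r$ to be odd, as the paper observes.
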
 
\begin{proof}
Suppose $z$ is perfect in $\mathcal O_{\mathbb{Q}(\sqrt{-1})}$. Then, by Corollary \ref{Cor2.1}, $z\sim r$ for some positive integer $r$ that is perfect in $\mathbb{Z}$. Furthermore, all integer primes that divide $r$ in $\mathbb{Z}$ must be inert in $\mathcal O_{\mathbb{Q}(\sqrt{-1})}$. We know (by Fact \ref{Fact1.4}) that an integer prime is inert in $\mathcal O_{\mathbb{Q}(\sqrt{-1})}$ if and only if it is congruent to $3$ modulo $4$, so we conclude that all integer primes that divide $r$ in $\mathbb{Z}$ must be congruent to 3 modulo 4. Thus, $r$ is odd, so Fact \ref{Fact2.2} tells us that there exists an integer prime $p$ that divides $r$ in $\mathbb{Z}$ and is congruent to $1$ modulo $4$. This is a contradiction, and the desired result follows. 
\end{proof}
\begin{theorem} \label{Thm2.4}
There are no perfect numbers in $\mathcal O_{\mathbb{Q}(\sqrt{-3})}$, the ring of Eisenstein integers.
\end{theorem}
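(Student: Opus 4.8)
The plan is to follow the template of the proof of Theorem \ref{Thm2.3}, but with a slightly finer modular analysis. By Corollary \ref{Cor2.1}, any $z$ that is perfect in $\mathcal O_{\mathbb{Q}(\sqrt{-3})}$ is associated to a positive integer $r$ that is perfect in $\mathbb{Z}$ and all of whose integer prime factors are inert in $\mathcal O_{\mathbb{Q}(\sqrt{-3})}$. The first step is to identify those inert primes: by Fact \ref{Fact1.4} together with quadratic reciprocity (which gives $\left(\frac{-3}{p}\right)=\left(\frac{p}{3}\right)$ for odd $p$), an odd integer prime $p\neq 3$ is inert exactly when $p\equiv 2\pmod 3$, the prime $3$ ramifies, and the prime $2$ is inert (and indeed $2\equiv 2\pmod 3$). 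Hence every integer prime factor of $r$ is congruent to $2$ modulo $3$; in particular $3\nmid r$ in $\mathbb{Z}$.

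Next I would split into the even and odd cases. If $r$ is even, Fact \ref{Fact2.1} gives $r=2^{p-1}(2^p-1)$ for a Mersenne prime $2^p-1$, which must then be $\equiv 2\pmod 3$. But $2\equiv -1\pmod 3$, so $2^p-1\equiv(-1)^p-1\pmod 3$, which equals $1$ when $p$ is an odd prime and $0$ when $p=2$; it is never $2$. This contradiction eliminates the even case.

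For the odd case, the key observation — and the step I expect to carry the argument — is that $\sigma$ interacts well with the congruence condition modulo $3$. Since every prime $p\mid r$ satisfies $p\equiv -1\pmod 3$, for a prime power $p^a\|r$ we get $\sigma(p^a)=1+p+\cdots+p^a\equiv 1+(-1)+\cdots+(-1)^a\pmod 3$, a sum of $a+1$ terms that vanishes modulo $3$ precisely when $a$ is odd. So if any exponent in the factorization of $r$ were odd, then $3\mid\sigma(r)=2r$, forcing $3\mid r$ and contradicting $3\nmid r$. Therefore every exponent in the prime factorization of $r$ is even, i.e. $r$ is a perfect square. But Fact \ref{Fact2.2} writes an odd perfect number in the Eulerian form $r=p^km^2$ with $p\nmid m$ and $k\equiv 1\pmod 4$, so $\upsilon_p(r)=k$ is odd — a contradiction. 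This rules out the odd case as well, so no perfect number exists in $\mathcal O_{\mathbb{Q}(\sqrt{-3})}$.

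The routine parts are the determination of the inert primes and the even case; the main obstacle is the odd case, whose success rests entirely on the parity-of-exponents argument via $\sigma\bmod 3$, and on checking that the Eulerian form genuinely forces an odd exponent incompatible with $r$ being a perfect square.
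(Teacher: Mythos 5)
Your proposal is correct and follows essentially the same route as the paper: reduce via Corollary \ref{Cor2.1} to an integer perfect number all of whose prime factors are $\equiv 2 \pmod 3$, kill the even case by reducing the Mersenne prime mod $3$, and kill the odd case using the fact that $3 \mid \sigma(p^a)$ when $p\equiv 2\pmod 3$ and $a$ is odd, together with the oddness of $k$ in the Eulerian form from Fact \ref{Fact2.2}. The only cosmetic difference is that you first deduce that all exponents of $r$ would have to be even (so $r$ would be a square) and then contradict the Eulerian form, whereas the paper applies the same mod-$3$ divisibility directly to $\sigma(p^k)$ to get $3\mid r$.
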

\begin{proof}
Suppose $z$ is perfect in $\mathcal O_{\mathbb{Q}(\sqrt{-3})}$. Then, by Corollary \ref{Cor2.1}, $z\sim r$ for some positive integer $r$ that is perfect in $\mathbb{Z}$. Furthermore, all integer primes that divide $r$ in $\mathbb{Z}$ must be inert in $\mathcal O_{\mathbb{Q}(\sqrt{-3})}$. We know (by Fact \ref{Fact1.4}) that an integer prime is inert in $\mathcal O_{\mathbb{Q}(\sqrt{-3})}$ if and only if it is congruent to $2$ modulo $3$, so we conclude that all integer primes that divide $r$ in $\mathbb{Z}$ must be congruent to $2$ modulo $3$. If $r$ is even, then we may write $r=2^{p-1}(2^p-1)$, where $2^p-1$ is a Mersenne prime. However, then $2^p-1$ is an integer prime that divides $r$ in $\mathbb{Z}$, so we conclude $2^p-1\equiv 2\imod{3}$, which is impossible. Therefore, $r$ is odd, so we may write $r$ in the Eulerian form $r=p^km^2$. From the fact that $r$ is perfect in $\mathbb{Z}$, we have $\displaystyle{2p^km^2=\sigma(p^km^2)=\sigma(p^k)\sigma(m^2)=\sigma(m^2)\sum_{j=0}^kp^j}$. Now, Fact \ref{Fact2.2} tells us that $k$ is odd, so $p+1\vert\sum_{j=0}^kp^j$ in $\mathbb{Z}$. As $p\equiv 2\imod{3}$, we find that $\displaystyle{3\vert\sum_{j=0}^kp^j\vert 2p^km^2=2r}$ in $\mathbb{Z}$. Therefore, $3\vert r$ in $\mathbb{Z}$, which is a contradiction because all the integer primes that divide $r$ in $\mathbb{Z}$ are congruent to $2$ modulo $3$. 
\end{proof}
The method used to prove Theorems \ref{Thm2.3} and \ref{Thm2.4} may be used to explore the properties that perfect numbers in the other seven rings (corresponding to $d\in\{-163,-67,-43,-19,-11,-7,-2\}$) must possess, but the casework can become tedious very quickly. For this reason, we will only briefly explore properties of hypothetical perfect numbers in $\mathcal O_{\mathbb{Q}(\sqrt{-2})}$. 
\par 
If $z$ is perfect in $\mathcal O_{\mathbb{Q}(\sqrt{-2})}$ (a ring in which $2$ ramifies), then $z\sim r$, where $r$ is an odd integer that is perfect in $\mathbb{Z}$ and has Eulerian form $r=p^km^2$. In addition, $-2$ is a quadratic nonresidue modulo an integer prime if and only if that integer prime is congruent to $5$ or $7$ modulo $8$. Therefore, because Fact \ref{Fact2.2} states that $p\equiv 1\imod{4}$ (and because $p$ must be inert), we find that $p\equiv 5\imod{8}$. Write $m=m_1m_2$ with $m_1,m_2\in\mathbb{N}$ so that all integer primes dividing $m_1$ in $\mathbb{Z}$ are congruent to $5$ modulo $8$ and all integer primes dividing $m_2$ in $\mathbb{Z}$ are congruent to $7$ modulo $8$. Let $\displaystyle{m_1=\prod_{j=1}^sq_j^{\alpha_j}}$  be the canonical prime factorization of $m_1$ in $\mathbb{Z}$, and let $L=\vert\{j\in\{1,2,\ldots,s\}: \alpha_j\hspace{1.5 mm}is\hspace{1.5 mm} odd\}\vert$. Using this notation, we may state and prove the following theorem. 
\begin{theorem} \label{Thm2.5}
Let all notation be as in the preceding paragraph. If $k\equiv 1\imod{8}$, then $L$ is odd. If $k\equiv 5\imod{8}$, then $L$ is even.
\end{theorem}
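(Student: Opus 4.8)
The plan is to substitute the Eulerian form $r = p^k m_1^2 m_2^2$ into the perfectness identity and then recover the parity of $L$ from a congruence modulo $4$. Since $p$, $m_1$, $m_2$ are pairwise coprime integers and $\sigma$ is multiplicative over $\mathbb{Z}$, perfectness of $r$ gives $2r = \sigma(p^k)\,\sigma(m_1^2)\,\sigma(m_2^2)$. The first preparatory step is to locate all the factors of $2$. Because $k \equiv 1 \imod{4}$ by Fact \ref{Fact2.2}, $k$ is odd, so $\sigma(p^k) = (1+p)\bigl(1 + p^2 + p^4 + \cdots + p^{k-1}\bigr)$; the second factor is a sum of $\tfrac{k+1}{2}$ odd terms and $\tfrac{k+1}{2}$ is odd, so it is odd, while $1 + p \equiv 6 \imod{8}$ because $p \equiv 5 \imod{8}$. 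Hence $\upsilon_2(\sigma(p^k)) = 1$. On the other hand, each factor $\sigma(q_j^{2\alpha_j})$ of $\sigma(m_1^2)$, and likewise each factor $\sigma(q^{2\beta})$ of $\sigma(m_2^2)$, is a sum of an odd number of odd terms, so $\sigma(m_1^2)$ and $\sigma(m_2^2)$ are odd. Writing $\sigma(p^k) = 2P$ with $P$ odd and cancelling the lone factor of $2$, the identity becomes an equality of odd integers, $r = P\,\sigma(m_1^2)\,\sigma(m_2^2)$.

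Next I would evaluate each of these odd quantities modulo $4$. Since $p \equiv 1 \imod{4}$ and any odd square is $\equiv 1 \imod{8}$, we get $r \equiv 1 \imod{4}$. Each $q_j$ is $\equiv 5 \imod{8}$, hence $\equiv 1 \imod{4}$, so every power of $q_j$ is $\equiv 1 \imod{4}$ and therefore $\sigma(q_j^{2\alpha_j}) \equiv 2\alpha_j + 1 \imod{4}$, which is $1$ or $3$ modulo $4$ according as $\alpha_j$ is even or odd; multiplying over $j$ and using $\sum_j \alpha_j \equiv L \imod{2}$ yields $\sigma(m_1^2) \equiv (-1)^L \imod{4}$. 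This is the step that actually converts the combinatorial quantity $L$ into arithmetic. Each prime $q \equiv 7 \imod{8}$ dividing $m_2$ satisfies $q \equiv 3 \imod{4}$, so $q^i \equiv (-1)^i \imod{4}$ and $\sigma(q^{2\beta}) = \sum_{i=0}^{2\beta} q^i \equiv \sum_{i=0}^{2\beta}(-1)^i = 1 \imod{4}$; hence $\sigma(m_2^2) \equiv 1 \imod{4}$. Finally, writing $S = 1 + p^2 + \cdots + p^{k-1}$ and using $p^2 \equiv 1 \imod{8}$, we get $S \equiv \tfrac{k+1}{2} \imod{8}$, and combining with $1 + p \equiv 6 \imod{8}$ gives $\sigma(p^k) \equiv 6S \imod{8}$, so $P = \sigma(p^k)/2 \equiv 3S \equiv -\tfrac{k+1}{2} \imod{4}$.

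Substituting these four congruences into $r = P\,\sigma(m_1^2)\,\sigma(m_2^2)$ modulo $4$ gives $1 \equiv \bigl(-\tfrac{k+1}{2}\bigr)(-1)^L \imod{4}$, and since every odd residue is its own inverse modulo $4$ this rearranges to $(-1)^L \equiv -\tfrac{k+1}{2} \imod{4}$. When $k \equiv 1 \imod{8}$ we have $\tfrac{k+1}{2} \equiv 1 \imod{4}$, forcing $(-1)^L \equiv -1 \imod{4}$, so $L$ is odd; when $k \equiv 5 \imod{8}$ we have $\tfrac{k+1}{2} \equiv 3 \imod{4}$, forcing $(-1)^L \equiv 1 \imod{4}$, so $L$ is even. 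The main obstacle I anticipate is the $2$-adic bookkeeping: one must verify that $\upsilon_2(\sigma(p^k))$ is exactly $1$ so that dividing the identity by $2$ leaves a genuine equation of odd integers, and one must push the reduction of $\sigma(p^k)$ far enough (to pin down $P$ modulo $4$) to see the dependence on $k \imod{8}$, since the parity of $L$ is an all-or-nothing consequence of these residues.
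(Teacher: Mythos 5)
Your proposal is correct and follows essentially the same route as the paper: both arguments reduce the perfectness identity $2r=\sigma(p^k)\sigma(m_1^2)\sigma(m_2^2)$ using $p\equiv 5$, $q_j\equiv 5$, $q\equiv 7\imod 8$, observe that $\sigma(m_2^2)\equiv 1$, pin down $\sigma(p^k)$ via $k\imod 8$, and read off the parity of $L$ from the factors of $\sigma(m_1^2)$ being $1$ or $3$ modulo $4$. The only difference is bookkeeping: you cancel the single factor of $2$ from $\sigma(p^k)$ up front and work modulo $4$ with odd integers, whereas the paper keeps the equation for $2r$ and works modulo $8$ throughout, dividing by $2$ at the end.
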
 
\begin{proof}
First, as $p$, $k$, and $m$ are all odd, we have $p^km^2=p\cdot (p^2)^{\frac{k-1}{2}}m^2\equiv p(1)(1)\equiv 5\imod{8}$. Therefore, $2r=2p^km^2\equiv 2\imod{8}$. Now,
\[2r=\sigma(p^km^2)=\sigma(p^k)\sigma(m_1^2)\sigma(m_2^2)=\sigma(m_2^2)\left(\sum_{l=0}^kp^l\right)\prod_{j=1}^s\left(\sum_{l=0}^{2\alpha_j}q_j^l\right).\]
Let $q$ be an integer prime that divides $m_2$ in $\mathbb{Z}$. Then $\sigma(q^{\upsilon_q(m_2^2)})=1+q+q^2+\cdots+q^{\upsilon_q(m_2^2)}\equiv 1\imod{8}$ because $\upsilon_q(m_2^2)$ is even and $q\equiv 7\imod{8}$. This implies that $\sigma(m_2^2)\equiv 1\imod{8}$. Furthermore, for any $j\in\{1,2,\ldots,s\}$, we have $\displaystyle{\sum_{l=0}^{2\alpha_j}q_j^l\equiv\sum_{l=0}^{2\alpha_j}5^l\equiv 6\alpha_j+1\imod{8}}$. 
Assume $k\equiv 1\imod{8}$. One may verify that, under this assumption, $\displaystyle{\sum_{l=0}^kp^l\equiv\sum_{l=0}^k 5^l\equiv 6\imod{8}}$. We then have
\[2\equiv 2r\equiv\sigma(m_2^2)\left(\sum_{l=0}^kp^l\right)\prod_{j=1}^s\left(\sum_{l=0}^{2\alpha_j}q_j^l\right)\equiv 6\prod_{j=1}^s(6\alpha_j+1)\imod{8},\]
which implies $\displaystyle{\prod_{j=1}^s(6\alpha_j+1)\equiv 3\imod{4}}$. Whenever $\alpha_j$ is even, $6\alpha_j+1\equiv 1\imod{4}$. In addition, whenever $\alpha_j$ is odd, $6\alpha_j+1\equiv 3\imod{4}$. Therefore, $L$, which is the number of integers $j\in\{1,2,\ldots,s\}$ such that $\alpha_j$ is odd, must be an odd number. 
\par 
On the other hand, if $k\equiv 5\imod{8}$, then $\displaystyle{\sum_{l=0}^kp^l\equiv\sum_{l=0}^k 5^l\equiv 2\imod{8}}$. We then have
\[2\equiv 2r\equiv\sigma(m_2^2)\left(\sum_{l=0}^kp^l\right)\prod_{j=1}^s\left(\sum_{l=0}^{2\alpha_j}q_j^l\right)\equiv 2\prod_{j=1}^s(6\alpha_j+1)\imod{8},\]
which implies $\displaystyle{\prod_{j=1}^s(6\alpha_j+1)\equiv 1\imod{4}}$. Again, whenever $\alpha_j$ is even, $6\alpha_j+1\equiv 1\imod{4}$. Also, whenever $\alpha_j$ is odd, $6\alpha_j+1\equiv 3\imod{4}$. Therefore, $L$ must be an even number in this case. 
\end{proof}
We note that it has been conjectured (supposedly by Descartes) that the value of $k$ in the Eulerian form of any hypothetical odd number that is perfect in $\mathbb{Z}$ must be $1$. If the conjecture is true, then Theorem \ref{Thm2.5} implies that $L$ must be odd.
\par 
We note that there are definitely rings $\mathcal O_{\mathbb{Q}(\sqrt{d})}$ with $d\in K$ that contain perfect numbers. For example, one may show that an integer prime is inert in $\mathcal O_{\mathbb{Q}(\sqrt{-11})}$ if and only if that integer prime is congruent to $2$, $6$, $7$, $8$, or $10$ modulo $11$. Therefore, if $2^p-1$ is a Mersenne prime that is congruent to $6$ or $7$ (one may show, using the fact that $p$ is prime, that $2^p-1$ cannot be congruent to $2$, $8$, or $10$) modulo $11$, then $2^{p-1}(2^p-1)$ is perfect in $\mathcal O_{\mathbb{Q}(\sqrt{-11})}$. For example, $28$, $8\hspace{0.25 mm}128$, $2^{13-1}(2^{13}-1)$, and $2^{17-1}(2^{17}-1)$ (this list is not exhaustive) are all perfect in $\mathcal O_{\mathbb{Q}(\sqrt{-11})}$.   

\section{Suggestions for Further Exploration} 
We acknowledge the entirely possible generalization of the definitions presented here to the other quadratic integer rings. In particular, generalizing the abundancy index to unique factorization domains $\mathcal O_{\mathbb{Q}(\sqrt{d})}$ with $d>0$ seems to be a manageable task. 
\par 
Even if we continue to restrict our attention to the rings $\mathcal O_{\mathbb{Q}(\sqrt{d})}$ with $d\!\in\! K$, we may ask some interesting questions. For example, in a given ring $\mathcal O_{\mathbb{Q}(\sqrt{d})}$ with $d\in K$, one may wish to examine the properties of $t$-perfect numbers for $t>2$. 
\section{Acknowledgments} 
The author would like to thank Professor Pete Johnson for inviting him to the 2014 REU Program in Algebra and Discrete Mathematics at Auburn University. The author would also like to thank the unknown referee for his or her careful reading.


\begin{thebibliography}{9}
\begin{singlespace}
\bibitem{Conrad} 
Conrad, Keith. Factoring in Quadratic Fields. Available at \\ 
http://www.math.uconn.edu/$\sim$kconrad/blurbs/ugradnumthy \\ 
/quadraticundergrad.pdf. 

\bibitem{Defant14}
Defant, Colin. An extension of the abundancy index to certain quadratic rings, Submitted (2014). 

\bibitem{Nguyen00}
Nguyen, A. M., 2000: Odd Perfect Numbers. Department of Mathematics, San Jose State University, 34--37 pp. 

\bibitem{Stark67}
Stark, H. M. A complete determination of the complex quadratic fields of class-number one. Michigan Math. J. 14 1967 1--27.

\bibitem{Strayer02}
Strayer, James, 2002: \emph{Elementary Number Theory}. Waveland Pree, Inc., 93--94 pp.
\end{singlespace} 
\end{thebibliography}
\end{document}